\numberwithin{equation}{section}
\newtheorem{theorem}{Theorem}
\newtheorem{proposition}{Proposition}[section]
\newtheorem{lemma}[proposition]{Lemma}
\theoremstyle{remark}
\newtheorem{remark}[proposition]{Remark}
\newtheorem{step}{Step}
\newcommand{\R}{\mathbb{R}}
\newcommand{\N}{\mathbb{N}}
\newcommand{\loc}{\mathrm{loc}}
\newcommand{\DL}[1]{(\Delta, L^{#1})}
\newcommand{\cH}{\mathcal{H}}
\newcommand{\cM}{\mathcal{M}}
\newcommand{\defeq}{\coloneqq}
\DeclarePairedDelimiter{\abs}{\lvert}{\rvert}
\DeclarePairedDelimiter{\bracks}{\lbrack}{\rbrack}
\DeclarePairedDelimiter{\meas}{\lvert}{\rvert}
\DeclarePairedDelimiter{\norm}{\lVert}{\rVert}
\DeclarePairedDelimiter{\paren}{\lparen}{\rparen}
\DeclarePairedDelimiter{\set}{\lbrace}{\rbrace}
\DeclareMathOperator{\capt}{cap}
\DeclareMathOperator{\supp}{supp}
\newcommand{\NP}{N\!}
\let\d\relax
\newcommand{\d}{\mathop{}\!\mathrm{d}}
\let\c\relax
\newcommand{\c}{\mathrm{c}}
\title[Optimal control of elliptic problems with sparsity]{Optimal control of nonlinear elliptic problems with sparsity}
\author{Augusto C. Ponce}
\address{
Augusto C. Ponce\hfill\break\indent
Universit{\'e} catholique de Louvain\hfill\break\indent
Institut de Recherche en Math{\'e}matique et Physique\hfill\break\indent
Chemin du Cyclotron 2, bte L7.01.02\hfill\break\indent
1348 Louvain-la-Neuve\hfill\break\indent
Belgium}
\email{Augusto.Ponce@uclouvain.be}
\author{Nicolas Wilmet}
\address{
Nicolas Wilmet\hfill\break\indent
Universit{\'e} catholique de Louvain\hfill\break\indent
Institut de Recherche en Math{\'e}matique et Physique\hfill\break\indent
Chemin du Cyclotron 2, bte L7.01.02\hfill\break\indent
1348 Louvain-la-Neuve\hfill\break\indent
Belgium}
\email{Nicolas.Wilmet@uclouvain.be}
\date{\today}
\subjclass[2010]{Primary: 49K20; Secondary: 35J20, 35J91, 49J20, 49J52}
\keywords{Optimal control; Sparsity; Semilinear elliptic equations; Measure controls; Reduced limit}
\begin{document}

\begin{abstract}
We study the minimization of the cost functional
\[
F(\mu) = \norm{u - u_d}_{L^p(\Omega)} + \alpha \norm{\mu}_{\cM(\Omega)},
\]
where the controls \(\mu\) are taken in the space of finite Borel measures and \(u \in W_0^{1, 1}(\Omega)\) satisfies the equation \(- \Delta u + g(u) = \mu\) in the sense of distributions in \(\Omega\) for a given nondecreasing continuous function \(g : \R \to \R\) such that \(g(0) = 0\). We prove that \(F\) has a minimizer for every desired state \(u_d \in L^1(\Omega)\) and every control parameter \(\alpha > 0\). We then show that when \(u_d\) is nonnegative or bounded, every minimizer of \(F\) has the same property.
\end{abstract}

\maketitle

\section{Introduction and main results}

Let \(N \ge 2\) and \(\Omega \subset \R^N\) be a smooth bounded open set. Inspired by recent works of Casas, Clason and Kunisch~\citelist{\cite{ClasonKunisch:2011} \cite{CasasClasonKunisch:2012} \cite{CasasKunisch:2014}}, we investigate an optimal control problem with sparsity involving the nonlinear problem
\begin{equation}
\label{eq:nonlinearDirichletProblem}
\left\{
\begin{alignedat}{2}
-\Delta u + g(u) &= \mu && \quad \text{in \(\Omega\),} \\
u &= 0 && \quad \text{on \(\partial\Omega\),}
\end{alignedat}
\right.
\end{equation}
where \(g : \R \to \R\) is a nondecreasing continuous function such that \(g(0) = 0\). More precisely, we consider the minimization of the cost functional \(F : L^1(\Omega) \to \bracks{0, \infty}\), defined for \(\mu \in L^1(\Omega)\) by
\[
F(\mu) = \norm{u - u_d}_{L^p(\Omega)} + \alpha \norm{\mu}_{L^1(\Omega)},
\]
where \(u\) is the unique solution of \eqref{eq:nonlinearDirichletProblem} corresponding to \(\mu\). Here, \(p\) is any exponent satisfying \(1 \le p \le \infty\), \(u_d \in L^1(\Omega)\) is the given ideal (or desired) state and \(\alpha > 0\) is the control parameter.

Stadler observed in \cite{Stadler:2009} that---in contrast with the usual Hilbert-space \(L^2\) setting---the use of the \(L^1\) norm in the cost functional leads in many cases to optimal controls which are concentrated in a small region of the domain (sparsity phenomenon). This property is, for example, relevant in determining the optimal placement of actuators in distributed parameter systems, where control devices cannot be put all over the domain; see \cite{Casas:2017} for a recent review on sparsity in optimal control of partial differential equations.

When \(\mu \in L^2(\Omega)\), the solution of \eqref{eq:nonlinearDirichletProblem} belongs to \(W_0^{1, 2}(\Omega)\) and can be obtained by standard minimization of the associated energy functional; see e.g.~\cite{Ponce:2016}. The existence of a solution of \eqref{eq:nonlinearDirichletProblem} for any \(\mu \in L^1(\Omega)\) is due to Brezis and Strauss~\cite{BrezisStrauss:1973} and follows from approximation of \(\mu\) by a sequence of \(L^2\) functions. In this case, the unique solution of \eqref{eq:nonlinearDirichletProblem} is a function \(u \in W_0^{1, 1}(\Omega)\) such that \(g(u) \in L^1(\Omega)\) for which the equation
\[
- \Delta u + g(u) = \mu \quad \text{in \(\Omega\)}
\]
holds in the sense of distributions.

Due to the lack of weak compactness, the space \(L^1(\Omega)\) is not suitable in discussing the existence of minimizers of \(F\). It is therefore natural to enlarge the minimization class to the entire family of finite Borel measures, thus recovering weak compactness. Another advantage of such an extension is that controls \(\mu\) concentrated on small sets with Lebesgue measure zero are also allowed.

In the sequel, we denote by \(\cM(\Omega)\) the Banach space of finite Borel measures on \(\Omega\) equipped with the \emph{total variation} norm
\[
\norm{\mu}_{\cM(\Omega)} = \abs{\mu}(\Omega).
\]
Since the \(L^1\) norm and the total variation norm coincide on \(L^1(\Omega)\), we extend the functional \(F\) to \(\cM(\Omega)\) by setting
\[
F(\mu) = \norm{u - u_d}_{L^p(\Omega)} + \alpha \norm{\mu}_{\cM(\Omega)}
\]
when the Dirichlet problem \eqref{eq:nonlinearDirichletProblem} with datum \(\mu \in \cM(\Omega)\) is solvable. When \eqref{eq:nonlinearDirichletProblem} does not have a solution for a certain \(\mu \in \cM(\Omega)\), we let \(F(\mu) = \infty\).

The optimal control problem considered in this paper is henceforth the following:
\begin{equation}
\label{eq:optimalControlProblem}
\boxed{\hspace{.5em}\text{To find \(\mu^* \in \cM(\Omega)\) such that \(F(\mu^*) = \inf^{\phantom{}}_{\mu \in \cM(\Omega)} {} F(\mu)\).}\hspace{.5em}}
\end{equation}
This minimization problem is only relevant when \(F \not\equiv \infty\). Throughout the paper, we thus restrict our attention to ideal states \(u_d\) such that
\begin{equation}
\label{eq:finiteCostFunctional}
F \not\equiv \infty \quad \text{in \(\cM(\Omega)\).}
\end{equation}
This happens for example when \(u_d \in L^p(\Omega)\) since \(F(0) = \norm{u_d}_{L^p(\Omega)}\), but such an assumption is not necessary for \eqref{eq:finiteCostFunctional} to hold. For instance, if \(\frac{N}{N - 2} \le p \le \infty\) and the nonlinearity \(g\) satisfies the growth assumption
\begin{equation}
\label{eq:subcriticalGrowth}
\abs{g(t)} \le C (\abs{t}^q + 1),
\end{equation}
where \(1 \le q < p\), then there exist ideal states \(u_d \in L^1(\Omega)\) with \(u_d \not\in L^p(\Omega)\) satisfying a Lavrentiev phenomenon:
\[
\inf_{\mu \in \cM(\Omega)} {} F(\mu) < \inf_{\mu \in L^1(\Omega)} {} F(\mu) = \infty;
\]
see~\Cref{prop:nonexistenceSummableControl,prop:nonexistenceSummableControlInf} below.

One of our main results concerning an arbitrary nondecreasing continuous function \(g\) is
\begin{theorem}
\label{thm:existence}
The optimal control problem \eqref{eq:optimalControlProblem} has a solution for every \(1 \le p \le \infty\), \(u_d \in L^1(\Omega)\) and \(\alpha > 0\).
\end{theorem}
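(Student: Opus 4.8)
The plan is to apply the direct method of the calculus of variations. By assumption \eqref{eq:finiteCostFunctional}, the value $m \defeq \inf_{\mu \in \cM(\Omega)} F(\mu)$ is finite, so I take a minimizing sequence $(\mu_n)$ with $F(\mu_n) \to m$; discarding the finitely many terms where $F = \infty$, each $\mu_n$ is admissible, i.e.\ \eqref{eq:nonlinearDirichletProblem} has a solution $u_n \in W_0^{1,1}(\Omega)$ with $g(u_n) \in L^1(\Omega)$. The bound $\alpha \norm{\mu_n}_{\cM(\Omega)} \le F(\mu_n)$ shows that $(\mu_n)$ is bounded in $\cM(\Omega)$. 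Since $\cM(\Omega)$ is the dual of $C_0(\Omega)$, I extract a subsequence (not relabelled) with $\mu_n \overset{*}{\rightharpoonup} \mu^*$ for some $\mu^* \in \cM(\Omega)$, and $\norm{\mu^*}_{\cM(\Omega)} \le \liminf_n \norm{\mu_n}_{\cM(\Omega)}$.

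Next I would pass to the limit in the state equation. Testing \eqref{eq:nonlinearDirichletProblem} against suitable truncations (Kato's inequality) and using $g(t)\sgn(t) = \abs{g(t)} \ge 0$ gives the a priori bounds $\norm{g(u_n)}_{L^1(\Omega)} \le \norm{\mu_n}_{\cM(\Omega)}$ together with a bound for $u_n$ in $W_0^{1,q}(\Omega)$ for every $q < \frac{N}{N-1}$. By the compact Sobolev embedding I may assume $u_n \to u$ in $L^1(\Omega)$ and a.e., $\nabla u_n \rightharpoonup \nabla u$ in $L^q$, whence $g(u_n) \to g(u)$ a.e.\ and, by Fatou, $g(u) \in L^1(\Omega)$. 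In the distributional formulation $\int_\Omega \nabla u_n \cdot \nabla \varphi + \int_\Omega g(u_n) \varphi = \int_\Omega \varphi \d \mu_n$ for $\varphi \in C_c^\infty(\Omega)$, the first and last terms pass to the limit; the obstruction is the middle term, since $(g(u_n))$ need not be equi-integrable when $g$ is superlinear and $(\mu_n)$ concentrates. I therefore only know $g(u_n) \d x \overset{*}{\rightharpoonup} \lambda$ for some $\lambda \in \cM(\Omega)$, so that $-\Delta u = \mu^* - \lambda$ in the sense of distributions. Consequently $u$ solves \eqref{eq:nonlinearDirichletProblem} with datum $\hat\mu \defeq -\Delta u + g(u) = \mu^* - (\lambda - g(u) \d x)$, a finite measure; by construction $u$ is its solution, so $\hat\mu$ is automatically admissible and $F(\hat\mu) < \infty$.

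The heart of the argument---and the step I expect to be the main obstacle---is the total-variation inequality $\norm{\hat\mu}_{\cM(\Omega)} \le \liminf_n \norm{\mu_n}_{\cM(\Omega)}$. Writing $\hat\mu = \mu^* - \sigma$ with the defect measure $\sigma \defeq \lambda - g(u) \d x$, and using $\langle \mu^*, \varphi \rangle \le \liminf_n \norm{\mu_n}_{\cM(\Omega)}$ for $\norm{\varphi}_{L^\infty(\Omega)} \le 1$, one gets $\langle \hat\mu, \varphi \rangle \le \liminf_n \norm{\mu_n}_{\cM(\Omega)} - \langle \sigma, \varphi \rangle$. The inequality then follows provided one can test against a sequence $\varphi_k$ with $\langle \hat\mu, \varphi_k \rangle \to \norm{\hat\mu}_{\cM(\Omega)}$ and $\liminf_k \langle \sigma, \varphi_k \rangle \ge 0$. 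This is exactly the reduced-limit phenomenon: $\hat\mu$ is a reduced limit of $(\mu_n)$, the defect $\sigma$ is concentrated on the set where $\abs{u_n} \to \infty$, which has zero $W^{1,2}$-capacity, and there $\sigma$ carries the same sign as the concentrated part of $\hat\mu$ (positive where $u_n \to +\infty$, negative where $u_n \to -\infty$). Choosing $\varphi_k$ continuous, bounded by $1$, and approaching the density $\sgn$ of $\hat\mu$ with respect to $\abs{\hat\mu}$ then forces $\langle \sigma, \varphi_k \rangle \to \int \sgn(\hat\mu) \d\sigma \ge 0$. Making this sign/capacitary analysis rigorous is the delicate point; concentrated mass can only be destroyed, never created, in the limit.

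Finally I would conclude by lower semicontinuity of the fidelity term. Since $u_n \to u$ a.e., Fatou's lemma gives $\norm{u - u_d}_{L^p(\Omega)} \le \liminf_n \norm{u_n - u_d}_{L^p(\Omega)}$ for $1 \le p < \infty$, and for $p = \infty$ the same bound follows by passing to the a.e.\ limit in $\abs{u_n - u_d} \le \norm{u_n - u_d}_{L^\infty(\Omega)}$. Combining with the total-variation estimate,
\[
F(\hat\mu) \le \liminf_n \norm{u_n - u_d}_{L^p(\Omega)} + \alpha \liminf_n \norm{\mu_n}_{\cM(\Omega)} \le \liminf_n F(\mu_n) = m,
\]
where the last inequality uses the superadditivity of the lower limit. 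Hence $\hat\mu$ realizes the infimum and is the desired minimizer.
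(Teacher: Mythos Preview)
Your overall framework coincides with the paper's: take a minimizing sequence, pass to a subsequence so that both \(\mu_n \overset{*}{\rightharpoonup} \mu^*\) and \(u_n \to u\) in \(L^1\), recognize that \(u\) solves \eqref{eq:nonlinearDirichletProblem} with datum \(\hat\mu = -\Delta u + g(u)\) (the \emph{reduced limit} \(\mu^\#\) of \((\mu_n)\)), and conclude via lower semicontinuity of the two terms of \(F\). Your treatment of the fidelity term is correct and matches the paper. The genuine gap is precisely where you say it is: the inequality \(\norm{\hat\mu}_{\cM(\Omega)} \le \liminf_n \norm{\mu_n}_{\cM(\Omega)}\).

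Your proposed route to this inequality---analyzing the defect \(\sigma = \lambda - g(u)\,dx\), asserting that it is concentrated on a set of zero \(W^{1,2}\) capacity where its sign agrees with that of \(\hat\mu\), and then testing against approximations of \(\sgn(\hat\mu)\)---is not a proof, and it is not clear it can be made into one along these lines. The claim that \(\sigma\) and \(\hat\mu\) share signs on the blow-up set is heuristic: \(\hat\mu = -\Delta u + g(u)\) depends on \(u\) globally through \(\Delta u\), and there is no a~priori reason its sign near a concentration point should be dictated by whether \(u_n \to +\infty\) or \(-\infty\) there. Even the assertion that \(\sigma\) is carried by a set of zero capacity requires justification beyond equi-integrability considerations.

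The paper bypasses the defect-measure analysis entirely. Its argument (\Cref{prop:lowerSemicontinuityRL}) runs as follows: take the Jordan decomposition \(\mu_n = \mu_n^+ - \mu_n^-\); use the nontrivial fact that if \(\mu_n\) is a good measure then so are \(\mu_n^+\) and \(-\mu_n^-\) \cite{BrezisMarcusPonce:2007}*{Theorem~4.9}; extract weak* limits \(\mu_\oplus, \mu_\ominus\) and reduced limits \(\mu_\oplus^\#, \mu_\ominus^\#\) for these one-signed sequences. The deep input is then the \emph{monotonicity of the reduced limit} \cite{MarcusPonce:2010}*{Theorem~7.1}: since \(-\mu_n^- \le \mu_n \le \mu_n^+\), one has \(\mu_\ominus^\# \le \hat\mu \le \mu_\oplus^\#\). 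Combined with the easy Fatou bounds \(\mu_\ominus \le \mu_\ominus^\#\) and \(\mu_\oplus^\# \le \mu_\oplus\), this gives \(\mu_\ominus \le \hat\mu \le \mu_\oplus\), hence \(\norm{\hat\mu}_{\cM(\Omega)} \le \norm{\mu_\oplus}_{\cM(\Omega)} + \norm{\mu_\ominus}_{\cM(\Omega)} \le \liminf_n \norm{\mu_n}_{\cM(\Omega)}\) by weak* lower semicontinuity and \eqref{eq:normDecomposition}. The sign information you were trying to extract from \(\sigma\) is thus replaced by an order relation on reduced limits, proved once and for all in \cite{MarcusPonce:2010}.
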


Problem \eqref{eq:optimalControlProblem} has been studied in \cite{CasasKunisch:2014} for nonlinearities \(g\) which are subcritical in the sense that \eqref{eq:subcriticalGrowth} is satisfied for some exponent \(1 \le q < \frac{N}{N-2}\). For such a nonlinearity \(g\), B\'enilan and Brezis~\cite{BenilanBrezis:2003} proved that \eqref{eq:nonlinearDirichletProblem} is solvable for every \(\mu \in \cM(\Omega)\), and solutions of \eqref{eq:nonlinearDirichletProblem} are compactly embedded in \(L^q(\Omega)\); see also~\cite{Ponce:2016}*{Proposition~21.1}. One then deduces the lower semicontinuity of \(F\) with respect to weak* convergence in \(\cM(\Omega)\).

In the supercritical case \(q \ge \frac{N}{N-2}\) for \(N \ge 3\), the functional \(F\) is no longer lower semicontinuous with respect to weak* convergence in \(\cM(\Omega)\) when \(1 \le p \le q\). The reason is that if \((\mu_n)_{n \in \N}\) is a sequence in \(\cM(\Omega)\) converging weakly* to \(\mu\) in \(\cM(\Omega)\) and if the sequence \((u_n)_{n \in \N}\) of solutions of \eqref{eq:nonlinearDirichletProblem} corresponding to \(\mu_n\) converges strongly to some function \(u\) in \(L^1(\Omega)\), then \(u\) need not be a solution of \eqref{eq:nonlinearDirichletProblem} involving \(\mu\). For example---in the spirit of \cite{BenilanBrezis:2003}*{Remark~A.4}---take \(\Omega = B(0; 1)\) the unit ball in \(\R^N\), \(g(t) = \abs{t}^{p-1}t\) and a sequence of mollifiers \(\mu_n = \rho_n\). In this case, the sequence \((\rho_n)_{n \in \N_*}\) converges weakly* to the Dirac mass \(\delta_0\), but one has \(F(\delta_0) = \infty\) while \(\limsup\limits_{n \to \infty} {} F(\mu_n) < \infty\); see~\Cref{prop:notLowerSemicontinuous} below.

To handle \Cref{thm:existence} in full generality, we propose a different approach based on a lower semicontinuity property of the \emph{reduced limit} that we establish in \Cref{sec:RL} below. The concept of reduced limit associated to sequences of measures in connection with \eqref{eq:nonlinearDirichletProblem} has been introduced in \cite{MarcusPonce:2010}.

In what follows, let \(\mu^*\) be any solution of the minimization problem \eqref{eq:optimalControlProblem}. In optimal control theory, such a solution is called an optimal control. The optimal state \(u^*\) associated to \(\mu^*\) is the unique solution of \eqref{eq:nonlinearDirichletProblem} corresponding to \(\mu^*\). We show that \(u^*\) shares many properties with \(u_d\). For example, one has

\begin{theorem}
\label{thm:nonnegative}
If \(u_d\) is nonnegative, then \(u^*\) is also nonnegative.
\end{theorem}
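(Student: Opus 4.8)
The plan is to argue by truncation of the optimal state. Fix an optimal control $\mu^*$ with associated optimal state $u^*$, which I denote simply by $u$, and suppose for contradiction that $\set{u < 0}$ has positive Lebesgue measure. I would take $v \defeq u^+ = \max\set{u, 0}$ as a competitor state, and first identify the control it solves. Since $\Delta u \in \cM(\Omega)$, Kato's inequality gives $-\Delta u^+ \le \chi_{\set{u > 0}}(-\Delta u)$, so that $\tilde\mu \defeq -\Delta u^+ + g(u^+)$ is a finite measure; because $g$ is nondecreasing with $g(0) = 0$ one has $\chi_{\set{u>0}}g(u) = g(u^+)$, whence $\tilde\mu = \chi_{\set{u > 0}}\mu^* - \sigma$, where $\sigma \ge 0$ is the Kato defect measure, concentrated on $\set{u = 0}$. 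As $u^+ \in W_0^{1,1}(\Omega)$ vanishes on $\partial\Omega$ and $g(u^+) \in L^1(\Omega)$, uniqueness for \eqref{eq:nonlinearDirichletProblem} identifies $u^+$ as the state associated with $\tilde\mu$.

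Two estimates must then be established. The data term is controlled pointwise: since $u_d \ge 0$, one checks that $\abs{u^+ - u_d} \le \abs{u - u_d}$ everywhere, with strict inequality on $\set{u < 0}$ (there the left side equals $u_d$ while the right side equals $u_d - u > u_d$), so that $\norm{u^+ - u_d}_{L^p(\Omega)} \le \norm{u - u_d}_{L^p(\Omega)}$. The control term requires $\norm{\tilde\mu}_{\cM(\Omega)} \le \norm{\mu^*}_{\cM(\Omega)}$. Since $\chi_{\set{u>0}}\mu^*$ and $\sigma$ live on the disjoint sets $\set{u>0}$ and $\set{u=0}$, one has $\norm{\tilde\mu}_{\cM(\Omega)} = \abs{\mu^*}(\set{u>0}) + \sigma(\Omega)$, so the whole matter reduces to the inequality $\sigma(\Omega) \le \abs{\mu^*}(\set{u \le 0})$.

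This last bound is the heart of the argument and the step I expect to be the main obstacle. I would obtain it by running the same construction on $-u$, which solves a Dirichlet problem of the same type with nonlinearity $s \mapsto -g(-s)$ and datum $-\mu^*$: its positive part $u^-$ is then the state of a measure $\tilde\nu = -\chi_{\set{u<0}}\mu^* - \sigma'$ with $\sigma' \ge 0$ again concentrated on $\set{u=0}$, and comparing the two decompositions on $\set{u=0}$ (where $g(u) = 0$) yields $\sigma(\Omega) = \sigma'(\Omega) - \mu^*(\set{u=0})$. The decisive ingredient is that the total mass of $-\Delta w$ is nonnegative for every nonnegative $w \in W_0^{1,1}(\Omega)$ with $\Delta w \in \cM(\Omega)$ (a boundary-flux fact, justified by approximation by smooth solutions). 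Applied to $w = u^-$ and combined with $\int_\Omega -g(-u^-)\,\d x \ge 0$, this forces $\tilde\nu(\Omega) \ge 0$, i.e.\ $\sigma'(\Omega) \le -\mu^*(\set{u < 0})$; substituting gives $\sigma(\Omega) \le -\mu^*(\set{u \le 0}) \le \abs{\mu^*}(\set{u \le 0})$, as desired. Carrying the defect measure around and pinning it to $\set{u = 0}$ rather than to $\set{u < 0}$ is the delicate bookkeeping here; note that the weaker total-mass estimate coming from $u^+$ itself only gives $\sigma(\Omega) \le \mu^*(\set{u > 0})$, which does not suffice, so using the negative part $u^-$ is essential.

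Combining the two estimates yields $F(\tilde\mu) \le F(\mu^*)$, and optimality forces equality in both terms. For $1 \le p < \infty$, equality in the data term together with the strict pointwise inequality on $\set{u < 0}$ forces $\abs{\set{u < 0}} = 0$, i.e.\ $u^* \ge 0$ almost everywhere, contradicting the assumption and proving the theorem. The borderline exponent $p = \infty$ does not close in this way, since the $L^\infty$ data term need not decrease strictly; there I would fall back on an approximation in $p$, or on a refinement showing that the control inequality is strict whenever $\set{u < 0}$ is nonnegligible.
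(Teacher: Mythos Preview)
Your argument is correct and reaches the same variational comparison as the paper, but the route is genuinely different. The paper does not track the Kato defect measure at all: it first proves a general truncation estimate (\Cref{prop:truncationSupersolution} via \Cref{lem:truncation}) showing that for any nonnegative supersolution \(w\) of \(-\Delta + g\), the truncated function \(z = \min\{u^*, w\}\) satisfies \(\norm{-\Delta z + g(z)}_{\cM(\Omega)} \le \norm{\mu^*}_{\cM(\Omega)}\), proved by mollification and the divergence theorem. Your inequality is exactly the special case \(w = 0\) of (the subsolution counterpart of) that proposition, obtained instead by Kato's inequality plus the boundary-flux identity \((-\Delta u^-)(\Omega) \ge 0\). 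The trade-off: the paper's lemma is more general (it simultaneously yields \Cref{thm:bounded} and the supersolution comparison \Cref{prop:boundedSupersolution}) and avoids any mention of where the defect measure sits; your approach is more direct for this particular theorem but requires care with precise representatives when you restrict \(\mu^*\) and \(\sigma\) to the quasi-continuous level sets \(\{\hat u > 0\}\), \(\{\hat u = 0\}\), \(\{\hat u < 0\}\), and when you assert that \(\sigma\) is carried by \(\{\hat u = 0\}\). These points are standard but should be stated.

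On the endpoint \(p = \infty\): you are right that equality of \(L^\infty\) norms does not by itself force \(\abs{\{u^* < 0\}} = 0\), and you flag this honestly. The paper's own proof of \Cref{prop:boundedSupersolution} passes through the same step (``It then follows from the equality above that \(\{u^* > w\}\) must have Lebesgue measure zero'') without singling out \(p = \infty\), so your caution here is, if anything, an improvement in rigor rather than a defect relative to the paper.
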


Similarly, if \(u_d\) is bounded, then the same is true for \(u^*\). More precisely,

\begin{theorem}
\label{thm:bounded}
If \(u_d \in L^\infty(\Omega)\), then \(u^* \in L^\infty(\Omega)\) and
\[
\norm{u^*}_{L^\infty(\Omega)} \le \norm{u_d}_{L^\infty(\Omega)}.
\]
\end{theorem}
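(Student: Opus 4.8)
The plan is to rule out the excess sets \(\set{u^* > M}\) and \(\set{u^* < -M}\), where \(M \defeq \norm{u_d}_{L^\infty(\Omega)}\), by a truncation--competitor argument that refines the proof of \Cref{thm:nonnegative} (which is the borderline case \(M = 0\), where one truncates on one side only). Writing \(T\) for the truncation \(T(s) \defeq \max\set{\min\set{s, M}, -M}\), I would set \(v \defeq T(u^*)\) and take as competitor the distribution \(\nu \defeq -\Delta v + g(v)\); by uniqueness of solutions, \(v\) is then exactly the solution of \eqref{eq:nonlinearDirichletProblem} with datum \(\nu\), so that \(F(\nu) = \norm{v - u_d}_{L^p(\Omega)} + \alpha \norm{\nu}_{\cM(\Omega)}\). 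Two facts then need to be established: first, that \(\nu\) is a finite measure with \(\norm{\nu}_{\cM(\Omega)} \le \norm{\mu^*}_{\cM(\Omega)}\); and second, that \(v\) is no farther from \(u_d\) than \(u^*\), namely \(\norm{v - u_d}_{L^p(\Omega)} \le \norm{u^* - u_d}_{L^p(\Omega)}\). Granting these, optimality of \(\mu^*\) forces \(F(\nu) = F(\mu^*)\), so both inequalities are in fact equalities; showing that this can only happen when \(\set{\abs{u^*} > M}\) is negligible will give \(\abs{u^*} \le M\) almost everywhere, hence the stated bound.

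The state-fidelity estimate is the soft part. Since \(\abs{u_d} \le M\) almost everywhere, \(T\) is, at almost every point, the nearest-point projection onto \([-M, M]\) evaluated at a value \(u_d\) already lying in that interval, so \(\abs{v - u_d} \le \abs{u^* - u_d}\) pointwise, with strict inequality exactly on \(\set{\abs{u^*} > M}\). Integrating gives the \(L^p\) inequality for every \(1 \le p \le \infty\), and for \(p < \infty\) it is strict as soon as \(\set{\abs{u^*} > M}\) has positive measure, which is what I need to close the argument by contradiction.

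The control estimate \(\norm{\nu}_{\cM(\Omega)} \le \norm{\mu^*}_{\cM(\Omega)}\) is the heart of the matter. Formally---say for smooth data, where the level sets are regular---one computes that \(\nu = \mu^*\) on \(\set{-M < u^* < M}\), while on \(\set{u^* \ge M}\) one finds \(\nu = g(M) + \sigma\) with \(\sigma \ge 0\) the nonnegative boundary-layer measure \(\abs{\nabla u^*}\, \cH^{N-1}\) restricted to \(\set{u^* = M}\), produced by the kink of \(T\) (and symmetrically on \(\set{u^* \le -M}\)). A priori this singular layer could enlarge the total variation, but it cannot: testing the equation on the super-level set gives \(\mu^*(\set{u^* > M}) = \sigma(\Omega) + \int_{\set{u^* > M}} g(u^*)\), and since \(g\) is nondecreasing one has \(g(u^*) \ge g(M)\) there, whence \(\sigma(\Omega) + \int_{\set{u^* > M}} g(M) \le \mu^*(\set{u^* > M}) \le \abs{\mu^*}(\set{u^* \ge M})\). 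Thus the mass \(\nu\) gains on the truncated region is at most the mass of \(\mu^*\) it discards there; summing the contributions of both excess sets yields \(\norm{\nu}_{\cM(\Omega)} \le \norm{\mu^*}_{\cM(\Omega)}\).

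The main obstacle is making this last computation rigorous for a genuine measure datum \(\mu^* \in \cM(\Omega)\) and a solution \(u^* \in W_0^{1,1}(\Omega)\), where the level sets need not be smooth and \(\mu^*\) may carry a concentrated part. I would replace the formal integration by parts by Kato's inequality applied to the truncations \((u^* - M)^+\) and \((u^* + M)^-\)---which guarantees that \(\Delta v\) is a measure and pins down its sign on the level sets---or, alternatively, approximate \(\mu^*\) by smooth data, run the clean computation, and pass to the limit using the stability and lower-semicontinuity results of \Cref{sec:RL}. A second, genuinely separate, difficulty is the endpoint \(p = \infty\): there the state-fidelity inequality need not be strict even when \(u^*\) exceeds \(M\) on a set of positive measure, since the worst deviation \(\abs{u^* - u_d}\) may be attained where \(\abs{u^*} \le M\); in that case the contradiction must be drawn instead from strictness in the total-variation term or by a limiting argument in \(p\), and I expect this endpoint to require a dedicated treatment.
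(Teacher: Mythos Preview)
Your approach matches the paper's: it applies \Cref{prop:boundedSupersolution} (one-sided truncation with the constant supersolution \(w=M\), then the symmetric subsolution version with \(-M\)), whose proof uses exactly your competitor \(\nu=-\Delta v+g(v)\), the pointwise state-fidelity inequality, and the control estimate \(\norm{\nu}_{\cM(\Omega)}\le\norm{\mu^*}_{\cM(\Omega)}\); the latter is \Cref{prop:truncationSupersolution}, established via \Cref{lem:truncation} through the mollification-and-Morse--Sard approximation your formal computation anticipates. On your \(p=\infty\) worry: the paper gives no separate treatment there either---it concludes directly from \(\norm{v-u_d}_{L^p(\Omega)}=\norm{u^*-u_d}_{L^p(\Omega)}\) together with the strict pointwise inequality on \(\set{u^*>w}\) that this set is null---so the subtlety you flag is present in the paper's argument as written as well.
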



By standard interpolation, one deduces from \Cref{thm:bounded} that \(u^* \in W_0^{1, 2}(\Omega)\); see~\cite{CasasKunisch:2014}*{Theorem~5.1} in the subcritical case.

The paper is organized as follows. In \Cref{sec:RL}, we recall the notion of reduced limit from~\cite{MarcusPonce:2010} and we prove a new property concerning the lower semicontinuity of the total variation norm with respect to the reduced limit. \Cref{thm:existence} is then proved in \Cref{sec:existence}. In \Cref{sec:truncation}, we show that given a solution \(u\) of \eqref{eq:nonlinearDirichletProblem} corresponding to \(\mu\), its truncation \(\min {} \set{u, w}\) with a nonnegative supersolution \(w\) yields a finite Borel measure whose total variation is \(\le \norm{\mu}_{\cM(\Omega)}\). We use this property in \Cref{sec:regularity} to prove \Cref{thm:nonnegative,thm:bounded}. \Cref{sec:regularization} is dedicated to the regularization of the desired state \(u_d\) and the stability of the minimization problem \eqref{eq:optimalControlProblem}. In \Cref{sec:idealStateMeasure,sec:nonexistenceSummableControl}, we show that \(\mu^*\) need not be a summable function when \(u_d\) is unbounded. In \Cref{sec:example}, we discuss the lack of convexity and lower semicontinuity of \(F\) with respect to weak* convergence in \(\cM(\Omega)\). In \Cref{sec:lavrentiev}, we show that the Lavrentiev phenomenon cannot occur if \(g = \abs{t}^{p - 1} t\) and \(u_d \in L^p(\Omega)\).


\section{Reduced limits for nonlinear equations with measures}
\label{sec:RL}

In this section, we recall the notion of reduced limit introduced in~\cite{MarcusPonce:2010}. We also prove a new property of the reduced limit which is central in our proof of \Cref{thm:existence}; see~\Cref{prop:lowerSemicontinuityRL}. To motivate the concept of reduced limit, let us consider a bounded sequence \((\mu_n)_{n \in \N}\) of finite measures on \(\Omega\). By the weak compactness property of bounded sequences of measures, we may extract from \((\mu_n)_{n \in \N}\) a subsequence \((\mu_{n_k})_{k \in \N}\) converging weakly* to some finite measure \(\mu\) in \(\cM(\Omega)\)---that is,
\[
\lim_{k \to \infty} {} \int_\Omega {} \phi \d\mu_{n_k} = \int_\Omega {} \phi \d\mu,
\]
for every continuous function \(\phi : \overline{\Omega} \to \R\) with \(\phi = 0\) on \(\partial\Omega\); see e.g.~\cite{Ponce:2016}*{Proposition~2.8}. To simplify the notation, we assume in the following that the whole sequence \((\mu_n)_{n \in \N}\) converges weakly* to \(\mu\) in \(\cM(\Omega)\).

We now suppose that \eqref{eq:nonlinearDirichletProblem} with datum \(\mu_n\) has a solution for each \(n \in \N\) and we denote this unique solution by \(u_n\). In the literature, measures for which \eqref{eq:nonlinearDirichletProblem} has a solution are referred to as \emph{good measures}. One has the following estimate:
\begin{equation}
\label{eq:W011Estimate}
\norm{u_n}_{W_0^{1, 1}(\Omega)} \le C \norm{\mu_n}_{\cM(\Omega)},
\end{equation}
for some constant \(C > 0\) depending on \(N\) and \(\Omega\). This estimate can be deduced from elliptic estimates due to Littman, Stampacchia and Weinberger~\cite{LittmanStampacchiaWeinberger:1963}*{Theorem~5.1} and the following absorption estimate for solutions of \eqref{eq:nonlinearDirichletProblem} with measure data~\cite{BrezisMarcusPonce:2007}*{Proposition~4.B.3}:
\begin{equation}
\label{eq:absorptionEstimate}
\norm{g(u)}_{L^1(\Omega)} \le \norm{\mu}_{\cM(\Omega)}.
\end{equation}
By virtue of \eqref{eq:W011Estimate} and the Rellich--Kondrashov compactness theorem, taking a subsequence if necessary, we may thus assume that \((u_n)_{n \in \N}\) converges strongly in \(L^1(\Omega)\) to some function \(u\). In general, \(u\) is \emph{not} a solution of \eqref{eq:nonlinearDirichletProblem} involving \(\mu\); see e.g.~\cite{BrezisMarcusPonce:2007}*{Example~4.1}.


The situation is, however, not as dramatic as it seems:

\begin{proposition}
\label{prop:existenceRL}
The sequence \((u_n)_{n \in \N}\) converges strongly in \(L^1(\Omega)\) to some function \(u^\#\) such that \(g(u^\#) \in L^1(\Omega)\) and there exists \(\mu^\# \in \cM(\Omega)\) such that \(u^\#\) is the unique solution of \eqref{eq:nonlinearDirichletProblem} corresponding to \(\mu^\#\).
\end{proposition}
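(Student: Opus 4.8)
The plan is to recast the equation in the linear form \(-\Delta u_n = \mu_n - g(u_n)\) and pass to the limit in each term separately, the crucial point being that \(g(u_n)\) may concentrate part of its mass on a Lebesgue-null set in the limit, but in a way that is completely accounted for by a finite measure. First I would extract the compactness already available from the a priori bounds: by \eqref{eq:W011Estimate} the sequence \((u_n)\) is bounded in \(W_0^{1, 1}(\Omega)\), so by the Rellich--Kondrashov theorem a subsequence (which I relabel) satisfies \(u_n \to u^\#\) strongly in \(L^1(\Omega)\) and a.e.\ in \(\Omega\). Since \(g\) is continuous, \(g(u_n) \to g(u^\#)\) a.e., and Fatou's lemma together with the absorption estimate \eqref{eq:absorptionEstimate} give
\[
\int_\Omega \abs{g(u^\#)} \d x \le \liminf_{n \to \infty} {} \int_\Omega \abs{g(u_n)} \d x \le \liminf_{n \to \infty} {} \norm{\mu_n}_{\cM(\Omega)} < \infty,
\]
so that \(g(u^\#) \in L^1(\Omega)\).

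Next I would capture the concentration defect. By \eqref{eq:absorptionEstimate} the sequence \((g(u_n) \d x)\) is bounded in \(\cM(\Omega)\), hence a further subsequence converges weakly* to some \(\lambda \in \cM(\Omega)\); in general \(\lambda \ne g(u^\#) \d x\). I then set \(\mu^\# \defeq \mu - \lambda + g(u^\#) \d x \in \cM(\Omega)\). Testing \(-\Delta u_n = \mu_n - g(u_n) \d x\) against \(\phi \in C_c^\infty(\Omega)\) and letting \(n \to \infty\)---using \(u_n \to u^\#\) in \(L^1\) on the left, and \(\mu_n \rightharpoonup \mu\), \(g(u_n)\d x \rightharpoonup \lambda\) on the right---yields \(-\Delta u^\# = \mu - \lambda\) in the sense of distributions, that is, \(-\Delta u^\# + g(u^\#) = \mu^\#\) in \(\Omega\). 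Note that this identity forces \(\lambda = \mu + \Delta u^\#\) to be the same for every weakly* convergent subsequence, so in fact \(g(u_n)\d x \rightharpoonup \lambda\) along the whole sequence and \(\mu^\#\) is well defined.

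Finally I would upgrade this distributional identity to the statement that \(u^\#\) is \emph{the} solution of \eqref{eq:nonlinearDirichletProblem} with datum \(\mu^\#\) in the required sense, and here I expect the main obstacle. The distributional equation in \(\Omega\) does not by itself encode that \(u^\# \in W_0^{1, 1}(\Omega)\) with the correct zero boundary behaviour, and the poor weak-compactness of \(W^{1, 1}\) prevents a direct passage to the limit on the boundary condition. I would resolve this through the linear theory with measure data: the data \(\sigma_n \defeq \mu_n - g(u_n)\d x\) are bounded in \(\cM(\Omega)\) and converge weakly* to \(\sigma \defeq \mu - \lambda = \mu^\# - g(u^\#)\d x\), and the estimates of Littman, Stampacchia and Weinberger guarantee that the corresponding \(W_0^{1, 1}\) solutions \(u_n\) converge in \(L^1(\Omega)\) to the unique \(W_0^{1, 1}\) solution \(w\) of \(-\Delta w = \sigma\). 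Hence \(u^\# = w \in W_0^{1, 1}(\Omega)\) solves \eqref{eq:nonlinearDirichletProblem} with datum \(\mu^\#\), and since \(g\) is nondecreasing the standard \(L^1\)-uniqueness for such problems shows that \(u^\#\) is the unique such solution, which completes the proof.
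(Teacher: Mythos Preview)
The paper does not actually prove this proposition: it cites Theorem~1.1 of \cite{MarcusPonce:2010} and remarks that the proof is ``rather straightforward.'' Your argument is correct and is exactly the straightforward proof alluded to---extract \(L^1\)-compactness from \eqref{eq:W011Estimate}, obtain \(g(u^\#)\in L^1\) by Fatou and \eqref{eq:absorptionEstimate}, capture the defect measure \(\lambda\) of \((g(u_n)\,dx)\), and set \(\mu^\# = \mu - \lambda + g(u^\#)\,dx\).

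One small simplification worth noting: the detour through the linear stability result to recover the boundary condition can be avoided. Instead of testing with \(\varphi\in C_c^\infty(\Omega)\), test the equation for \(u_n\) directly against \(\zeta\in C_0^\infty(\overline\Omega)\) (the Littman--Stampacchia--Weinberger formulation recalled in \Cref{sec:idealStateMeasure}). Since weak* convergence in \(\cM(\Omega)\) is defined precisely against such \(\zeta\), all three terms pass to the limit and the resulting identity
\[
-\int_\Omega u^\#\,\Delta\zeta + \int_\Omega g(u^\#)\,\zeta = \int_\Omega \zeta\,d\mu^\#
\]
is already the formulation that is equivalent to \(u^\#\) being the \(W_0^{1,1}\) solution of \eqref{eq:nonlinearDirichletProblem} with datum \(\mu^\#\).
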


In this case, we say that \(\mu^\#\) is the reduced limit of \((\mu_n)_{n \in \N}\). This is Theorem~1.1 in \cite{MarcusPonce:2010} regarding the existence of the reduced limit, whose proof is rather straightforward. A striking fact---much more difficult to prove---is that the reduced limit does not depend on the Dirichlet boundary condition; see~\cite{MarcusPonce:2010}*{Theorem~1.2}. In the sequel, whenever a sequence of finite measures \((\mu_n)_{n \in \N}\) is said to have a reduced limit, it is implicitly assumed that each measure \(\mu_n\) is a good measure.

Given a sequence \((\mu_n)_{n \in \N}\) of nonnegative measures in \(\cM(\Omega)\) with weak* limit \(\mu\) in \(\cM(\Omega)\) and reduced limit \(\mu^\#\), a straightforward application of Fatou's lemma gives the estimate
\[
\mu^\# \le \mu.
\]
A deeper property actually holds: if \((\mu_n)_{n \in \N}\) and \((\nu_n)_{n \in \N}\) are sequences in \(\cM(\Omega)\) with reduced limits \(\mu^\#\) and \(\nu^\#\), respectively, and if for every \(n \in \N\),
\[
\nu_n \le \mu_n,
\]
then
\begin{equation}
\label{eq:monotonicityRL}
\nu^\# \le \mu^\#;
\end{equation}
see~\cite{MarcusPonce:2010}*{Theorem~7.1}. In particular, if every measure \(\mu_n\) is nonnegative, then the reduced limit \(\mu^\#\) is also nonnegative.

The inequalities appearing above are meant in the sense of measures: given two finite measures \(\mu\) and \(\nu\) on \(\Omega\), one has
\begin{equation}
\label{eq:inequalityMeasures}
\nu \le \mu
\end{equation}
in the sense of measures if, for every Borel set \(A \subset \Omega\), \(\nu(A) \le \mu(A)\). In fact, inequality~\eqref{eq:inequalityMeasures} also holds in the sense of distributions in \(\Omega\):
\[
\int_\Omega {} \varphi \d\nu \le \int_\Omega {} \varphi \d\mu,
\]
for every nonnegative function \(\varphi \in C_c^\infty(\Omega)\); see \cite{Ponce:2016}*{Proposition~6.12} for the equivalence between the two notions.

We now prove the lower semicontinuity of the total variation norm with respect to the reduced limit. Once this new property is established, \Cref{thm:existence} can be proved along the lines of the direct method of the calculus of variations.

\begin{proposition}
\label{prop:lowerSemicontinuityRL}
For every bounded sequence \((\mu_n)_{n \in \N}\) in \(\cM(\Omega)\) with reduced limit \(\mu^\#\), we have
\[
\norm{\mu^\#}_{\cM(\Omega)} \le \liminf_{n \to \infty} {} \norm{\mu_n}_{\cM(\Omega)}.
\]
\end{proposition}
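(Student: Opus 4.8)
The plan is to deduce the signed estimate from the already-recorded nonnegative case by splitting each $\mu_n$ into its positive and negative parts and using the monotonicity of the reduced limit \eqref{eq:monotonicityRL}. Write the Jordan decomposition $\mu_n = \mu_n^+ - \mu_n^-$, so that $\norm{\mu_n}_{\cM(\Omega)} = \mu_n^+(\Omega) + \mu_n^-(\Omega)$. First I would pass to a subsequence along which $\norm{\mu_n}_{\cM(\Omega)} \to \liminf_{n} \norm{\mu_n}_{\cM(\Omega)}$, and then extract a further subsequence so that $\mu_n^+ \rightharpoonup^{*} \mu_\infty^+$ and $\mu_n^- \rightharpoonup^{*} \mu_\infty^-$ weakly-$*$ in $\cM(\Omega)$, with $\mu_n^+(\Omega) \to a$ and $\mu_n^-(\Omega) \to b$; by construction $a + b = \liminf_{n} \norm{\mu_n}_{\cM(\Omega)}$. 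Since $u_n \to u^\#$ in $L^1(\Omega)$ for the whole sequence (\Cref{prop:existenceRL}), every subsequence of $(\mu_n)_{n \in \N}$ still has reduced limit $\mu^\#$.

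Before forming the reduced limits of the parts, I need to know that $\mu_n^+$ and $\mu_n^-$ are good measures. I would obtain this from Kato's inequality: if $u_n$ denotes the solution associated with $\mu_n$, then its truncation $u_n^+ = \max \set{u_n, 0}$ satisfies $-\Delta u_n^+ + g(u_n^+) \le \mu_n^+$ in the sense of distributions, so $u_n^+$ is a subsolution of the problem with datum $\mu_n^+$. Coupling it with the supersolution provided by the Stampacchia solution $w$ of the linear problem $-\Delta w = \mu_n^+$ --- which is nonnegative, hence satisfies $-\Delta w + g(w) \ge \mu_n^+$ and dominates $u_n^+$ --- the method of sub- and supersolutions yields a solution, so $\mu_n^+$ is good. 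The part $\mu_n^-$ is treated symmetrically. Because $g$ is not assumed odd, the negative part forces the use of the auxiliary nonlinearity $\check{g}(t) \defeq -g(-t)$, which is again nondecreasing, continuous and vanishes at $0$: if $w_n$ solves $-\Delta w_n + g(w_n) = -\mu_n^-$, then $-w_n$ solves the corresponding problem for $\check{g}$ with the nonnegative datum $\mu_n^-$.

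Along a common subsequence I then obtain reduced limits $\sigma_+^\# \ge 0$ for $(\mu_n^+)_{n \in \N}$ and $\sigma_-^\# \le 0$ for $(-\mu_n^-)_{n \in \N}$. Applying \eqref{eq:monotonicityRL} to the pointwise inequalities $-\mu_n^- \le \mu_n \le \mu_n^+$ gives the sandwich $\sigma_-^\# \le \mu^\# \le \sigma_+^\#$. Since $\sigma_+^\# \ge 0$, any measure below it has positive part below it as well, so $(\mu^\#)^+ \le \sigma_+^\#$; likewise $(\mu^\#)^- \le -\sigma_-^\#$. The Fatou estimate $\mu^\# \le \mu$ for nonnegative sequences, applied to $(\mu_n^+)_{n \in \N}$ and to the $\check{g}$-problem for $(\mu_n^-)_{n \in \N}$, yields $\sigma_+^\# \le \mu_\infty^+$ and $-\sigma_-^\# \le \mu_\infty^-$, while weak-$*$ lower semicontinuity of the total mass of nonnegative measures gives $\mu_\infty^+(\Omega) \le a$ and $\mu_\infty^-(\Omega) \le b$. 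Putting everything together,
\[
\norm{\mu^\#}_{\cM(\Omega)} = (\mu^\#)^+(\Omega) + (\mu^\#)^-(\Omega) \le \sigma_+^\#(\Omega) + (-\sigma_-^\#)(\Omega) \le a + b = \liminf_{n \to \infty} \norm{\mu_n}_{\cM(\Omega)}.
\]

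The main obstacle I anticipate lies in the two features that distinguish the signed case from the nonnegative one: justifying that the parts $\mu_n^\pm$ are good measures (required before \eqref{eq:monotonicityRL} can even be invoked), which rests on a version of Kato's inequality that must be handled with care because $\mu_n$ may carry a part concentrated on sets of zero capacity; and the asymmetry of $g$, which is what compels the passage to $\check{g}$ in order to control the negative part. Working instead with $\abs{\mu_n}$ would sidestep the goodness issue but only produces the estimate with a spurious factor $2$, so the separate treatment of $\mu_n^+$ and $\mu_n^-$ appears unavoidable. The possible escape of mass toward $\partial\Omega$ in the weak-$*$ limits $\mu_\infty^\pm$ is, by contrast, harmless, since it only lowers $\mu_\infty^\pm(\Omega)$ and thus reinforces the desired inequality.
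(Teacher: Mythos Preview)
Your strategy is exactly the paper's: split $\mu_n = \mu_n^+ - \mu_n^-$, pass to subsequences so that both parts have weak* limits and reduced limits, sandwich $\mu^\#$ between the two reduced limits via \eqref{eq:monotonicityRL}, bound those in turn by the weak* limits via Fatou, and finish with weak* lower semicontinuity of the total variation. The paper carries this out verbatim, with the one difference that it does not attempt to prove that $\mu_n^+$ and $-\mu_n^-$ are good measures; it simply invokes \cite{BrezisMarcusPonce:2007}*{Theorem~4.9}, which states precisely that the Jordan parts of a good measure are good.

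Your proposed proof of this fact is where the only real gap sits. The Kato step is fine: $u_n^+$ is indeed a subsolution for the datum $\mu_n^+$, with $g(u_n^+) \in L^1(\Omega)$ inherited from $g(u_n) \in L^1(\Omega)$. The problem is the supersolution. For the Stampacchia solution $w$ of $-\Delta w = \mu_n^+$ to be a supersolution in the sense required by the method of sub- and supersolutions (see the definition recalled before \Cref{prop:nonexistenceSummableControl}), one needs $g(w) \in L^1(\Omega)$. Nothing guarantees this: $w$ is only the \emph{linear} solution, and for supercritical $g$ it may well happen that $g(w) \notin L^1(\Omega)$ even though $\mu_n^+$ is good (for instance when $\mu_n^+$ is a Dirac mass and $g$ has polynomial growth just below the critical exponent, the linear solution blows up like $\abs{x}^{2-N}$ and $g$ of it fails to be integrable). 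So the sub/supersolution argument, as written, does not close. The result you need is genuinely nontrivial --- it is exactly the content of \cite{BrezisMarcusPonce:2007}*{Theorem~4.9} --- and the paper is right to quote it rather than reprove it. Once that is granted, everything else in your outline (including the passage to $\check g$ for the negative part, which the paper handles more directly by working with $-\mu_n^-$ and the same $g$) goes through and matches the paper's argument.
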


For the proof of \Cref{prop:lowerSemicontinuityRL}, we rely on the following property of good measures: if \(\mu \in \cM(\Omega)\) is a good measure, then \(\mu^+\) and \(- \mu^-\) are also good measures; see~\cite{BrezisMarcusPonce:2007}*{Theorem~4.9}. Here, \(\mu^+\) and \(\mu^-\) are the unique nonnegative measures given by the Jordan decomposition theorem such that
\[
\mu = \mu^+ - \mu^-
\]
and
\begin{equation}
\label{eq:normDecomposition}
\norm{\mu}_{\cM(\Omega)} = \norm{\mu^+}_{\cM(\Omega)} + \norm{\mu^-}_{\cM(\Omega)}.
\end{equation}

\begin{proof}[Proof of \Cref{prop:lowerSemicontinuityRL}]
By definition, the sequence \((\mu_n)_{n \in \N}\) converges weakly* to some finite measure \(\mu\) in \(\cM(\Omega)\). Taking a subsequence if necessary, we may assume that
\[
\lim_{n \to \infty} {} \norm{\mu_n}_{\cM(\Omega)} = \liminf_{n \to \infty} {} \norm{\mu_n}_{\cM(\Omega)}.
\]
In particular, the value of the limit does not change by taking a further subsequence of \((\mu_n)_{n \in \N}\). By the property of good measures mentioned above, we have that \(\mu_n^+\) and \(- \mu_n^-\) are good measures for every \(n \in \N\). Since the sequence \((\mu_n)_{n \in \N}\) is bounded in \(\cM(\Omega)\), one deduces from \eqref{eq:normDecomposition} that the sequences \((\mu_n^+)_{n \in \N}\) and \((- \mu_n^-)_{n \in \N}\) are also bounded in \(\cM(\Omega)\). Passing to a subsequence if necessary, we may thus assume that \((\mu_n^+)_{n \in \N}\) and \((- \mu_n^-)_{n \in \N}\) have weak* limits \(\mu_\oplus\) and \(\mu_\ominus\) in \(\cM(\Omega)\), and reduced limits \(\mu_\oplus^\#\) and \(\mu_\ominus^\#\), respectively. On the one hand, the monotonicity of the reduced limit \eqref{eq:monotonicityRL} implies that
\[
\mu_\ominus^\# \le \mu^\# \le \mu_\oplus^\#.
\]
On the other hand, we deduce from Fatou's lemma that
\[
\mu_\ominus \le \mu_\ominus^\# \quad \text{and} \quad \mu_\oplus^\# \le \mu_\oplus.
\]
Hence we have
\[
\mu_\ominus \le \mu^\# \le \mu_\oplus.
\]
This estimate implies that
\[
\norm{\mu^\#}_{\cM(\Omega)} \le \norm{\mu_\oplus}_{\cM(\Omega)} + \norm{\mu_\ominus}_{\cM(\Omega)}.
\]
It then follows from the lower semicontinuity of the total variation norm with respect to weak* convergence in \(\cM(\Omega)\) and identity~\eqref{eq:normDecomposition} that
\[
\norm{\mu^\#}_{\cM(\Omega)} \le \liminf_{n \to \infty} {} \norm{\mu_n^+}_{\cM(\Omega)} + \liminf_{n \to \infty} {} \norm{\mu_n^-}_{\cM(\Omega)} \le \liminf_{n \to \infty} {} \norm{\mu_n}_{\cM(\Omega)}.
\]
This concludes the proof of the proposition.
\end{proof}


\section{Proof of \Cref{thm:existence}}
\label{sec:existence}

We first prove that \(F\) is lower semicontinuous with respect to the reduced limit: if the sequence of measures \((\mu_n)_{n \in \N}\) in \(\cM(\Omega)\) has a reduced limit \(\mu^\#\) and satisfies \(F(\mu_n) < \infty\) for each \(n \in \N\), then
\begin{equation}
\label{eq:lowerSemicontinuityF}
F(\mu^\#) \le \liminf_{n \to \infty} {} F(\mu_n).
\end{equation}
Taking a subsequence if necessary, we may assume that the limit inferior in the right-hand side of \eqref{eq:lowerSemicontinuityF} is an actual limit. Denote by \(u_n\) the unique solution of \eqref{eq:nonlinearDirichletProblem} corresponding to \(\mu_n\). By definition, the sequence \((u_n)_{n \in \N}\) converges strongly in \(L^1(\Omega)\) to the unique solution \(u^\#\) of \eqref{eq:nonlinearDirichletProblem} with datum \(\mu^\#\). Taking a further subsequence if needed, we can assume that \((u_n)_{n \in \N}\) converges almost everywhere to \(u^\#\) in \(\Omega\). On the one hand, we have \(u^\# - u_d \in L^p(\Omega)\) and
\begin{equation}
\label{eq:liminf}
\norm{u^\# - u_d}_{L^p(\Omega)} \le \liminf_{n \to \infty} {} \norm{u_n - u_d}_{L^p(\Omega)}.
\end{equation}
In the case where \(1 \le p < \infty\), this estimate is a consequence of Fatou's lemma. When \(p = \infty\), we have
\[
\abs{u^\# - u_d} = \lim_{n \to \infty} {} \abs{u_n - u_d} \le \liminf_{n \to \infty} {} \norm{u_n - u_d}_{L^\infty(\Omega)}
\]
almost everywhere in \(\Omega\). Hence \eqref{eq:liminf} holds for every \(1 \le p \le \infty\). On the other hand, \Cref{prop:lowerSemicontinuityRL} implies that
\begin{equation}
\label{eq:lowerSemicontinuityNorm}
\norm{\mu^\#}_{\cM(\Omega)} \le \liminf_{n \to \infty} {} \norm{\mu_n}_{\cM(\Omega)}.
\end{equation}
Combining \eqref{eq:liminf} and \eqref{eq:lowerSemicontinuityNorm} we obtain \eqref{eq:lowerSemicontinuityF}.

Now, let \((\mu_n)_{n \in \N}\) be a minimizing sequence of \(F\)---that is, a sequence in \(\cM(\Omega)\) such that
\[
\lim_{n \to \infty} {} F(\mu_n) = \inf_{\mu \in \cM(\Omega)} {} F(\mu).
\]
Since \(F \not\equiv \infty\), we may assume that each \(\mu_n\) is a good measure satisfying \(F(\mu_n) < \infty\). Taking a subsequence if necessary, we may also assume that \((\mu_n)_{n \in \N}\) has a reduced limit \(\mu^\#\). The lower semicontinuity of \(F\) with respect to the reduced limit then implies that
\[
F(\mu^\#) \le \liminf_{n \to \infty} {} F(\mu_n) = \inf_{\mu \in \cM(\Omega)} {} F(\mu).
\]
This proves that \(\mu^\#\) is a solution of \eqref{eq:optimalControlProblem}. The proof of the theorem is thus complete.

\section{Truncation with nonnegative supersolutions}
\label{sec:truncation}

Let \(\nu \in \cM(\Omega)\) and let \(v\) be the unique solution of the linear Dirichlet problem
\[
\left\{
\begin{alignedat}{3}
- \Delta v &= \nu &&\quad \text{in \(\Omega\),} \\
v &= 0 &&\quad \text{on \(\partial\Omega\).}
\end{alignedat}
\right.
\]
Then, for every \(\kappa \ge 0\), the distribution \(\Delta \min {} \set{u, \kappa}\) is a finite measure on \(\Omega\) and
\[
\norm{\Delta \min {} \set{u, \kappa}}_{\cM(\Omega)} \le \norm{\Delta u}_{\cM(\Omega)};
\]
see~\cite{BrezisPonce:2008}*{Theorem~1.2}. The goal of this section is two-fold: to extend this result to the case where \(\kappa\) is not a constant and to find a counterpart of the estimate for the operator \(- \Delta + g\) with an absorption term.

Let us define
\[
X(\Omega) \defeq \set*{v \in W^{1, 1}(\Omega) : \Delta v \in \cM(\Omega)}
\]
and
\[
X_0(\Omega) \defeq X(\Omega) \cap W_0^{1, 1}(\Omega).
\]
The main result of this section is the following:

\begin{proposition}
\label{prop:truncationSupersolution}
Let \(\mu \in \cM(\Omega)\) be a good measure and let \(u\) be the unique solution of \eqref{eq:nonlinearDirichletProblem} with datum \(\mu\). Then, for every nonnegative function \(w \in X(\Omega)\) such that \(g(w) \in L^1(\Omega)\) and
\[
- \Delta w + g(w) \ge 0 \quad \text{in the sense of distributions in \(\Omega\),}
\]
the function \(z \defeq \min {} \set{u, w}\) satisfies \(z \in X_0(\Omega)\), \(g(z) \in L^1(\Omega)\) and
\[
\norm{- \Delta z + g(z)}_{\cM(\Omega)} \le \norm{- \Delta u + g(u)}_{\cM(\Omega)}.
\]
\end{proposition}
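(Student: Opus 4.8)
The plan is to build on the truncation estimate of Brezis and Ponce quoted above (which is the case \(g \equiv 0\), \(w \equiv \kappa\) constant) and to incorporate the two new features separately: the absorption nonlinearity \(g\), handled through its monotonicity, and the variable barrier \(w\), whose supersolution property is exactly what drives the final inequality. Throughout I would write \(h \defeq (u - w)^+ = u - z \ge 0\) and \(\tau \defeq -\Delta z + g(z)\).

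First I would settle the soft regularity of \(z\). Since \(z = \min{}\set{u, w}\), on \(\set{u \le w}\) one has \(g(z) = g(u)\) and on \(\set{u > w}\) one has \(g(z) = g(w)\), so \(\abs{g(z)} \le \abs{g(u)} + \abs{g(w)}\) and \(g(z) \in L^1(\Omega)\). As \(u \in W_0^{1, 1}(\Omega)\) and \(w \ge 0\) we have \(0 \le h \le u^+\), whence \(h \in W_0^{1, 1}(\Omega)\) and \(z = u - h \in W_0^{1, 1}(\Omega)\). Finally, since \(u \in X_0(\Omega)\) and \(w \in X(\Omega)\) give \(u - w \in X(\Omega)\), the cited truncation result applied with \(\kappa = 0\) to \(\pm(u - w)\) shows \(\Delta h = \Delta (u - w)^+ \in \cM(\Omega)\); hence \(z \in X_0(\Omega)\) and \(\tau \in \cM(\Omega)\).

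The estimate itself I would obtain by controlling the positive and negative parts of \(\tau\) separately and adding them. For the negative part, Kato's inequality applied to \((u - w)^+\) yields the lower bound
\[
-\Delta z + g(z) \ge \chi_{\set{u \le w}}\,\mu + \eta
\]
for some nonnegative measure \(\eta\) (collecting the concentrated Kato remainder and the restriction of \(-\Delta w + g(w) \ge 0\) to \(\set{u > w}\)); since \(\eta \ge 0\) this forces \(\tau^-(\Omega) \le \mu^-(\Omega)\). For the total mass I would use the identity \(\mu - \tau = -\Delta h + \chi_{\set{u > w}}(g(u) - g(w))\), in which the last term is nonnegative by the monotonicity of \(g\). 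The point is that \(h \ge 0\) lies in \(W_0^{1, 1}(\Omega)\) and has a finite-measure Laplacian, so this Laplacian has nonpositive total mass, \((\Delta h)(\Omega) \le 0\); integrating the identity over \(\Omega\) then gives \(\tau(\Omega) \le \mu(\Omega)\). Combining, \(\tau^+(\Omega) = \tau(\Omega) + \tau^-(\Omega) \le \mu(\Omega) + \mu^-(\Omega) = \mu^+(\Omega)\), and therefore
\[
\norm{\tau}_{\cM(\Omega)} = \tau^+(\Omega) + \tau^-(\Omega) \le \mu^+(\Omega) + \mu^-(\Omega) = \norm{\mu}_{\cM(\Omega)},
\]
which is the desired conclusion.

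Two points carry the real weight. The nonpositivity \((\Delta h)(\Omega) \le 0\) for a nonnegative \(h \in W_0^{1, 1}(\Omega)\) with measure Laplacian is the statement that the boundary flux of such a function points inward; I would prove it by testing \(\Delta h\) against an increasing family of cutoffs \(\psi_j \uparrow 1\) that are superharmonic in a shrinking boundary layer (using the smoothness of \(\partial\Omega\)), so that \(\int_\Omega \psi_j \d(\Delta h) = \int_\Omega h\,\Delta \psi_j \le 0\), and then letting \(j \to \infty\) by dominated convergence against the finite measure \(\Delta h\). The genuinely delicate step, however, is the Kato lower bound above: making it rigorous for measure data requires handling the diffuse and concentrated parts of \(\mu\) and interpreting the level sets \(\set{u \le w}\) and \(\set{u > w}\) through quasicontinuous representatives, which is where the Brezis--Ponce theory of Kato's inequality up to the boundary must be invoked with care. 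I expect this to be the main obstacle; the monotonicity of \(g\) and the supersolution inequality for \(w\) enter only to guarantee the favorable signs and do not themselves create difficulties.
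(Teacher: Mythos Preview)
Your approach is genuinely different from the paper's, and the skeleton is sound, but one step is not justified as written and the overall route is considerably more delicate than the paper's.

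The paper does \emph{not} use Kato's inequality or a positive/negative-part decomposition. Instead it proves a purely real-variable lemma: for \(u_1, u_2 \in X(\Omega)\) and \(a_1, a_2 \in L^1(\Omega)\) with \(u_1 \le u_2\) on \(\partial\Omega\) and \(-\Delta u_2 + a_2 \ge 0\), the truncation \(u = \min\{u_1,u_2\}\) and the patched absorption \(a = a_1\chi_{\{u_1 \le u_2\}} + a_2\chi_{\{u_1 > u_2\}}\) satisfy
\[
\int_\Omega |-\Delta u + a| \le \int_\Omega |-\Delta u_1 + a_1| + \int_{\{u_1 > u_2\}} (a_2 - a_1).
\]
This is established first for smooth data with a smooth coincidence set via the divergence theorem, then pushed through by mollification and a Sard/level-shift argument. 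The proposition follows immediately by taking \(a_i = g(u_i)\): the extra integral is \(\int_{\{u>w\}}(g(w)-g(u)) \le 0\) by monotonicity of \(g\). The whole measure-theoretic content---finiteness of \(\Delta z\), interpretation of level sets, boundary behavior---is absorbed into the approximation, so no version of Kato's inequality for measure data is needed.

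Your route via \(\tau^-(\Omega) \le \mu^-(\Omega)\) and \(\tau(\Omega)\le\mu(\Omega)\) can be made to work, but two points need repair. First, your appeal to the Brezis--Ponce truncation result to get \(\Delta h \in \cM(\Omega)\) is not legitimate: that theorem is stated for functions in \(X_0(\Omega)\), and \(u - w\) has no reason to vanish on \(\partial\Omega\). What is true is that \(h = (u-w)^+ \in W_0^{1,1}(\Omega)\); combining this with the Kato lower bound (so \(\Delta h\) is a nonnegative measure plus something finite) and your superharmonic cutoffs \(\psi_j\) actually \emph{yields} the finiteness of \(\Delta h\) on \(\Omega\) as a by-product---but you should present it that way rather than citing a result that does not apply. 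Second, you have correctly flagged the Kato step as the crux: making sense of \(\chi_{\{u \le w\}}\mu\) when \(\mu\) has a part concentrated on sets of zero capacity, and getting the inequality \(\Delta h \ge \chi_{\{u>w\}}\Delta(u-w)\) in the right form, requires the full measure-data version of Kato's inequality with quasicontinuous representatives. This is doable but heavy. The paper's approximation argument bypasses all of it.
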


We recall that the equation
\[
- \Delta w + g(w) \ge 0
\]
holds in the sense of distributions in \(\Omega\) if
\[
- \int_\Omega {} w \Delta \varphi + \int_\Omega {} g(w) \varphi  \ge 0,
\]
for every nonnegative function \(\varphi \in C_c^\infty(\Omega)\).

\Cref{prop:truncationSupersolution} is a straightforward consequence of

\begin{lemma}
\label{lem:truncation}
For \(i \in \set{1, 2}\), let \(u_i \in X(\Omega)\) and \(a_i \in L^1(\Omega)\). If \(u_1 \le u_2\) on \(\partial\Omega\) and \(- \Delta u_2 + a_2 \ge 0\) in \(\Omega\), then the functions
\[
u \defeq \min {} \set{u_1, u_2} \quad \text{and} \quad a \defeq \begin{cases}
a_1 & \text{in \(\set{u_1 \le u_2}\),} \\
a_2 & \text{in \(\set{u_2 < u_1}\),}
\end{cases}
\]
satisfy \(u \in X(\Omega)\), \(a \in L^1(\Omega)\) and
\[
\int_\Omega {} \abs{- \Delta u + a} \le \int_\Omega {} \abs{- \Delta u_1 + a_1} + \int_{\set{u_1 > u_2}} {} (a_2 - a_1).
\]
\end{lemma}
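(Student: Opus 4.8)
The plan is to reduce the statement to a single application of Kato's inequality for measures after rewriting the minimum as \(u = u_1 - w^+\), where \(w \defeq u_1 - u_2\). First I would record the structural facts. Since \(u_1, u_2 \in X(\Omega)\) we have \(w \in X(\Omega)\), and the boundary hypothesis \(u_1 \le u_2\) on \(\partial\Omega\) gives \(w \le 0\) on \(\partial\Omega\), hence \(w^+ \in W_0^{1, 1}(\Omega)\). By the truncation results for functions whose Laplacian is a measure, \(\Delta w^+ \in \cM(\Omega)\), so that \(u = u_1 - w^+ \in X(\Omega)\); that \(a \in L^1(\Omega)\) is immediate, as \(a\) equals \(a_1\) or \(a_2\) on a measurable partition of \(\Omega\). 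Writing \(\lambda_i \defeq -\Delta u_i + a_i \in \cM(\Omega)\) (with \(\lambda_2 \ge 0\) by hypothesis) and using \(a = a_1 + (a_2 - a_1)\chi_{\set{w > 0}}\), I would establish the identity in \(\cM(\Omega)\)
\[
\lambda \defeq -\Delta u + a = \lambda_1 + \Delta w^+ + (a_2 - a_1)\chi_{\set{w > 0}}.
\]

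The estimate would then follow from two inequalities for the finite measure \(\lambda\): a control of its total mass,
\[
\lambda(\Omega) \le \lambda_1(\Omega) + \int_{\set{u_1 > u_2}} (a_2 - a_1),
\]
and a control of its negative part, \(\lambda^- \le \lambda_1^-\) in \(\cM(\Omega)\). Granting both, I would conclude by the algebraic identity \(\int_\Omega \abs{\lambda} = \lambda(\Omega) + 2\lambda^-(\Omega)\): combining the two bounds and using \(\lambda_1(\Omega) + 2\lambda_1^-(\Omega) = \int_\Omega \abs{\lambda_1}\) yields exactly the desired inequality. Via the displayed identity, the mass bound reduces to the single claim \(\Delta w^+(\Omega) \le 0\), which holds because \(w^+\) is a nonnegative function in \(W_0^{1, 1}(\Omega)\) whose Laplacian is a measure; this expresses that the boundary flux of a nonnegative function vanishing on \(\partial\Omega\) is nonpositive, and I would justify it by testing \(\Delta w^+\) against a standard sequence of boundary cut-offs converging to \(1\).

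For the negative-part bound I would argue on the three sets of the partition \(\Omega = \set{w < 0} \cup \set{w = 0} \cup \set{w > 0}\), understood up to sets of zero capacity through the quasicontinuous representatives. On \(\set{w < 0} = \set{u_1 < u_2}\) one has \(u = u_1\) and \(a = a_1\), so \(\lambda = \lambda_1\) and \(\lambda^- = \lambda_1^-\) there; on \(\set{w > 0} = \set{u_1 > u_2}\) one has \(u = u_2\) and \(a = a_2\), so \(\lambda = \lambda_2 \ge 0\) and \(\lambda^- = 0 \le \lambda_1^-\) there. The remaining contribution is carried by the coincidence set \(\set{w = 0}\), where \(\chi_{\set{w > 0}} = 0\) and where Kato's inequality for measures forces the singular part of \(\Delta w^+\) to be nonnegative, giving \(\lambda \ge \lambda_1 \ge -\lambda_1^-\) on \(\set{w = 0}\) as well.

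The hard part will be this analysis on the coincidence set \(\set{u_1 = u_2}\). There the measures \(\lambda_1\) and \(\Delta w^+\) may both carry parts concentrated on sets of zero capacity, so the informal restriction of \(\lambda\) to \(\set{w = 0}\) must be made rigorous through the precise form of Kato's inequality, \(\Delta w^+ \ge \chi_{[\widehat{w} > 0]}\,\Delta w\), together with the nonnegativity of the concentrated part it produces on \(\set{\widehat{w} = 0}\). I would split \(\Delta w\) into its diffuse and concentrated parts, check that \(\chi_{[\widehat{w} > 0]}\) annihilates the relevant contributions on \(\set{\widehat{w} = 0}\), and thereby confirm that the restriction of \(\Delta w^+\) to the coincidence set is a nonnegative measure; everything else is bookkeeping with the Jordan decomposition.
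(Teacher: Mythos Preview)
Your route is genuinely different from the paper's. The paper never invokes Kato's inequality for measures; instead it proceeds by regularization in three steps: first the smooth case where \(\{u_1=u_2\}\) is a smooth hypersurface (handled by the Divergence theorem and an explicit computation of the surface term), then the case where \(\{u_1=u_2\}\) has Lebesgue measure zero (via mollification and the Morse--Sard theorem to force regular level sets), and finally the general case by a vertical shift \(u_2\mapsto u_2+s_k\) with \(s_k\to 0\) chosen so that the coincidence set is Lebesgue--null. This is more pedestrian but has the advantage that all measure--theoretic subtleties are confined to a limiting argument in \(L^1\).

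Your strategy via \(u=u_1-w^+\), the identity \(\lambda=\lambda_1+\Delta w^+ +(a_2-a_1)\chi_{\{w>0\}}\), and the split \(\int_\Omega|\lambda|=\lambda(\Omega)+2\lambda^-(\Omega)\) is clean, and for the \emph{diffuse} parts the partition argument over \(\{\hat w<0\}\cup\{\hat w=0\}\cup\{\hat w>0\}\) goes through with the precise Kato formula. The gap is in the treatment of the \emph{concentrated} parts. The sets \(\{\hat w>0\}\), \(\{\hat w=0\}\), \(\{\hat w<0\}\) are only determined up to polar sets, which is precisely where \((\lambda_1)_\c\), \((\lambda_2)_\c\) and \((\Delta w^+)_\c\) live; the statements ``on \(\{w<0\}\), \(\lambda=\lambda_1\)'' and ``on \(\{w>0\}\), \(\lambda=\lambda_2\)'' therefore have no intrinsic meaning for the concentrated parts, and the difficulty is not confined to the coincidence set as you suggest. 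Concretely, from the inverse maximum principle one only gets \((\Delta w^+)_\c\le 0\), which yields \((\lambda)_\c\le(\lambda_1)_\c\) and hence \(((\lambda)_\c)^-\ge((\lambda_1)_\c)^-\), the \emph{wrong} direction for your bound \(\lambda^-\le\lambda_1^-\). One can also write \((\lambda)_\c=(\lambda_2)_\c+(\Delta w^-)_\c\), but controlling \(-(\Delta w^-)_\c\) by \(((\lambda_1)_\c)^-\) does not follow from the inverse maximum principle either. So the step you label ``bookkeeping with the Jordan decomposition'' is in fact the crux, and it is not clear that the tools you list suffice; the regularization argument in the paper sidesteps this entirely by never having to restrict a concentrated measure to a set that is only defined quasi-everywhere.
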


\begin{proof}[Proof of \Cref{lem:truncation}]

We divide the proof of \Cref{lem:truncation} into three steps.

\begin{step}
The conclusion holds if \(u_i, a_i \in C^\infty(\overline{\Omega})\) and the set \(\set{u_1 = u_2}\) is a smooth compact manifold without boundary.
\end{step}

The assumption \(u_1 \le u_2\) on \(\partial\Omega\) implies that \(\partial \set{u_1 > u_2} = \set{u_1 = u_2}\). By the Divergence theorem, for every \(\varphi \in C_c^\infty(\Omega)\), we have
\[
\int_{\set{u_1 > u_2}} {} (u_1 - u_2) \Delta \varphi = \int_{\set{u_1 > u_2}} {} \varphi \Delta (u_1 - u_2) - \int_{\set{u_1 = u_2}} {} \varphi \frac{\partial}{\partial n} (u_1 - u_2) \d\sigma,
\]
where \(n\) is the unit normal vector on \(\set{u_1 = u_2}\) pointing outwards with respect to \(\set{u_1 > u_2}\) and \(\sigma\) denotes the surface measure of \(\set{u_1 = u_2}\) that coincides with the Hausdorff measure \(\cH^{N - 1} \lfloor_{\set{u_1 = u_2}}\). The equation
\[
\Delta (u_1 - u_2)^+ = \chi_{\set{u_1 > u_2}} \Delta (u_1 - u_2) - \frac{\partial}{\partial n} (u_1 - u_2) \cH^{N - 1} \lfloor_{\set{u_1 = u_2}}
\]
thus holds in the sense of distributions in \(\Omega\). Since
\[
u = u_1 - (u_1 - u_2)^+
\]
and the set \(\set{u_1 = u_2}\) is negligible for the Lebesgue measure, one has
\begin{multline*}
- \Delta u + a = \chi_{\set{u_1 < u_2}} (- \Delta u_1 + a_1) + \chi_{\set{u_1 > u_2}} (- \Delta u_2 + a_2) \\ - \frac{\partial}{\partial n} (u_1 - u_2) \cH^{N-1} \lfloor_{\set{u_1 = u_2}}
\end{multline*}
in the sense of distributions in \(\Omega\); thus in the sense of measures on \(\Omega\). Notice that the minimum of the function \(u_1 - u_2\) in \(\set{u_1 \ge u_2}\) is achieved on \(\set{u_1 = u_2}\); whence we have \(\frac{\partial}{\partial n} (u_1 - u_2) \le 0\) on \(\set{u_1 = u_2}\). Computing the total variation of \(- \Delta u + a\) yields
\begin{multline}
\label{eq:normBigEq}
\int_\Omega {} \abs{- \Delta u + a} \le \int_{\set{u_1 < u_2}} {} \abs{- \Delta u_1 + a_1} + \int_{\set{u_1 > u_2}} {} \abs{- \Delta u_2 + a_2} \\ - \int_{\set{u_1 = u_2}} {} \frac{\partial}{\partial n} (u_1 - u_2) \d\sigma.
\end{multline}
On the other hand, by the Divergence theorem, we have
\begin{multline}
\label{eq:normalDerivativeBigEq}
- \int_{\set{u_1 = u_2}} {} \frac{\partial}{\partial n} (u_1 - u_2) \d\sigma = - \int_{\set{u_1 > u_2}} {} \Delta (u_1 - u_2) \\
= \int_{\set{u_1 > u_2}} {} (- \Delta u_1 + a_1) - \int_{\set{u_1 > u_2}} {} (- \Delta u_2 + a_2) + \int_{\set{u_1 > u_2}} {} (a_2 - a_1).
\end{multline}
Since \(- \Delta u_2 + a_2 \ge 0\), we also have
\begin{equation}
\label{eq:normNonnegativeMeasure}
\int_{\set{u_1 > u_2}} {} (- \Delta u_2 + a_2) = \int_{\set{u_1 > u_2}} {} \abs{- \Delta u_2 + a_2}.
\end{equation}
Combining \eqref{eq:normBigEq}, \eqref{eq:normalDerivativeBigEq} and \eqref{eq:normNonnegativeMeasure} we obtain
\[
\int_\Omega {} \abs{- \Delta u + a} \le \int_\Omega {} \abs{- \Delta u_1 + a_1} + \int_{\set{u_1 > u_2}} {} (a_2 - a_1).
\]

\begin{step}
The conclusion holds if \(u_i\) and \(a_i\) are as in the statement and the set \(\set{u_1 = u_2}\) has Lebesgue measure zero.
\end{step}

Let \(\omega \Subset \Omega\) be an open subset and let \((\rho_n)_{n \in \N_*}\) be a sequence of mollifiers in \(C_c^\infty(\R^N)\) such that \(\omega - \supp \rho_n \Subset \Omega\). Fubini's theorem implies that
\begin{equation}
\label{eq:regularized}
- \Delta (\rho_n * u_i) + \rho_n * a_i = \rho_n * \mu_i \quad \text{in the sense of distributions in \(\omega\);}
\end{equation}
see e.g.~the proof of Proposition~2.7 in \cite{Ponce:2016}. We seek to apply Step~1 to the regularized functions \(\rho_n * u_i\) and \(\rho_n * a_i\). For this purpose, as \(0\) need not be a regular value of \(\rho_n * (u_1 - u_2)\), we use the Morse--Sard theorem~\cite{Willem:2013}*{Theorem~7.4.3} to deduce that for each \(n \in \N_*\) there exists a regular value \(t_n\) of \(\rho_n * (u_1 - u_2)\) such that \(0 \le t_n \le 1/n\). Let us then define the functions
\[
z_n \defeq \min {} \set{\rho_n * u_1, \rho_n * u_2 + t_n}
\]
and
\[
b_n \defeq \begin{cases}
\rho_n * a_1 & \text{in \(\set{\rho_n * u_1 \le \rho_n * u_2 + t_n}\),} \\
\rho_n * a_2 & \text{in \(\set{\rho_n * u_2 + t_n < \rho_n * u_1}\).}
\end{cases}
\]
Applying Step~1 to the functions \(\rho_n * u_1\) and \(\rho_n * u_2 + t_n\) in \(\omega\) we obtain
\begin{multline}
\label{eq:regularizedBigEstimate}
\int_\omega {} \abs{- \Delta z_n + b_n} \le \int_\omega {} \abs{- \Delta (\rho_n * u_1) + \rho_n * a_1} \\ + \int_{\set{\rho_n * u_1 > \rho_n * u_2 + t_n} \cap \omega} {} \rho_n * (a_2 - a_1).
\end{multline}
Since the sequence \((\rho_n * (a_2 - a_1))_{n \in \N_*}\) converges strongly to \(a_2 - a_1\) in \(L^1(\Omega)\), by the partial converse of the Dominated convergence theorem, passing to a subsequence if necessary, there exists \(f \in L^1(\Omega)\) such that
\[
\abs{\rho_n * (a_2 - a_1)} \le f \quad \text{almost everywhere in \(\Omega\);}
\]
see e.g.~\cite{Willem:2013}*{Proposition~4.2.10}. Taking a further subsequence if needed, we may assume that the sequences \((\rho_n * (a_2 - a_1))_{n \in \N_*}\) and \((\rho_n * u_i)_{n \in \N_*}\) converge almost everywhere in \(\Omega\) to the functions \(a_2 - a_1\) and \(u_i\), respectively. Since \(\set{u_1 = u_2}\) has Lebesgue measure zero, it thus follows that the characteristic functions \(\chi_{\set{\rho_n * u_1 > \rho_n * u_2 + t_n}}\) converge almost everywhere to \(\chi_{\set{u_1 > u_2}}\) in \(\Omega\) as \(n\) tends to infinity. By the Dominated convergence theorem, we then have
\[
\lim_{n \to \infty} {} \int_{\set{\rho_n * u_1 > \rho_n * u_2 + t_n} \cap \omega} \rho_n * (a_2 - a_1) = \int_{\set{u_1 > u_2} \cap \omega} {} (a_2 - a_1).
\]
On the other hand, it follows from \eqref{eq:regularized} that
\[
\lim_{n \to \infty} {} \int_\omega {} \abs{- \Delta (\rho_n * u_1) + \rho_n * a_1} = \norm{- \Delta u_1 + a_1}_{\cM(\omega)};
\]
see also~\cite{Ponce:2016}*{Proposition~2.7}. Estimate \eqref{eq:regularizedBigEstimate} then implies that
\[
\int_\omega {} \abs{- \Delta u + a} \le \norm{- \Delta u_1 + a_1}_{\cM(\omega)} + \int_{\set{u_1 > u_2} \cap \omega} (a_2 - a_1).
\]
Let us now take a nondecreasing sequence \((\omega_k)_{k \in \N}\) of open subsets of \(\Omega\) such that
\[
\bigcup_{k = 0}^\infty \omega_k = \Omega.
\]
Applying the previous estimate with \(\omega \defeq \omega_k\) and letting \(k\) tend to infinity, the conclusion follows from the Monotone set lemma.

\begin{step}
Proof of \Cref{lem:truncation} completed.
\end{step}

Let \(S \defeq \set{s \in \R : \meas{\set{u_1 - u_2 = s}} \neq 0}\). Since the set \(S\) is countable, its complement is dense in \(\R\). Hence, there exists a sequence \((s_k)_{k \in \N}\) of nonnegative numbers in \(\R \setminus S\) converging to \(0\). Applying Step~2 with the function \(u_2 + s_k\) instead of \(u_2\) and taking the limit as \(k\) tends to infinity in the given estimate, the conclusion follows from the Dominated convergence theorem. The proof of the lemma is complete.
\end{proof}

We now proceed with the

\begin{proof}[Proof of \Cref{prop:truncationSupersolution}]
Let \(w \in X(\Omega)\) be a nonnegative function such that \(g(w) \in L^1(\Omega)\) and
\[
- \Delta w + g(w) \ge 0 \quad \text{in the sense of distributions in \(\Omega\).}
\]
The function \(z \defeq \min {} \set{u, w}\) belongs to \(W_0^{1, 1}(\Omega)\). We then deduce from the preceding lemma that \(z \in X_0(\Omega)\), \(g(z) \in L^1(\Omega)\) and
\[
\int_\Omega {} \abs{- \Delta z + g(z)} \le \int_\Omega {} \abs{- \Delta u + g(u)} + \int_{\set{u > w}} {} (g(w) - g(u)).
\]
Since \(g\) is a nondecreasing function, the last integral is nonpositive; whence
\[
\int_\Omega {} \abs{- \Delta z + g(z)} \le \int_\Omega {} \abs{- \Delta u + g(u)},
\]
and the proof is complete.
\end{proof}

\begin{remark}
\label{rem:truncationSubsolution}
\Cref{prop:truncationSupersolution} has an immediate counterpart for truncation with nonpositive subsolutions of the operator \(- \Delta + g\). More precisely, if \(w \in X(\Omega)\) is a nonpositive function such that \(g(w) \in L^1(\Omega)\) and
\[
- \Delta w + g(w) \le 0 \quad \text{in the sense of distributions in \(\Omega\),}
\]
then the function \(z \defeq \max {} \set{u, w}\) satisfies \(z \in X_0(\Omega)\), \(g(z) \in L^1(\Omega)\) and
\[
\norm{- \Delta z + g(z)}_{\cM(\Omega)} \le \norm{- \Delta u + g(u)}_{\cM(\Omega)}.
\]
This property follows from \Cref{prop:truncationSupersolution} applied to \(-u\), \(-w\) and \(\tilde{g}(t) = - g(-t)\).
\end{remark}

\section{Proofs of \Cref{thm:nonnegative,thm:bounded}}
\label{sec:regularity}

\Cref{thm:nonnegative,thm:bounded} can be deduced from the following statement:

\begin{proposition}
\label{prop:boundedSupersolution}
Let \(w \in X(\Omega)\) be a nonnegative function such that \(g(w) \in L^1(\Omega)\) and
\[
- \Delta w + g(w) \ge 0 \quad \text{in the sense of distributions in \(\Omega\).}
\]
If \(u_d \le w\) almost everywhere in \(\Omega\), then \(u^* \le w\) almost everywhere in \(\Omega\).
\end{proposition}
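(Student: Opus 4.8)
The plan is to use the optimal control $\mu^*$ and its state $u^*$ from the statement and to feed $u^*$ into the truncation estimate of \Cref{prop:truncationSupersolution}. Since $w$ is a nonnegative supersolution of $-\Delta + g$ with $g(w) \in L^1(\Omega)$, that proposition applied to $u = u^*$ shows that $z \defeq \min {} \set{u^*, w}$ belongs to $X_0(\Omega)$, satisfies $g(z) \in L^1(\Omega)$, and that the measure $\nu \defeq -\Delta z + g(z)$ is a good measure whose unique solution is $z$, with
\[
\norm{\nu}_{\cM(\Omega)} \le \norm{\mu^*}_{\cM(\Omega)}.
\]
In particular $F(\nu) = \norm{z - u_d}_{L^p(\Omega)} + \alpha \norm{\nu}_{\cM(\Omega)}$, so $\nu$ is an admissible competitor against which to test the optimality of $\mu^*$.

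First I would compare the fidelity terms using the hypothesis $u_d \le w$. On the set $\set{u^* \le w}$ one has $z = u^*$, while on $\set{u^* > w}$ one has $u_d \le w < u^*$, so that $0 \le w - u_d < u^* - u_d$; hence $\abs{z - u_d} \le \abs{u^* - u_d}$ almost everywhere in $\Omega$, with strict inequality on $\set{u^* > w}$. This yields $\norm{z - u_d}_{L^p(\Omega)} \le \norm{u^* - u_d}_{L^p(\Omega)}$ for every $1 \le p \le \infty$, and therefore
\[
F(\nu) \le F(\mu^*) = \inf_{\mu \in \cM(\Omega)} {} F(\mu).
\]
By optimality of $\mu^*$ this forces $F(\nu) = F(\mu^*)$, and since $\alpha > 0$ both the fidelity and the control-norm terms must coincide for $\nu$ and $\mu^*$.

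For $1 \le p < \infty$ this closes the argument at once: the equality $\norm{z - u_d}_{L^p(\Omega)} = \norm{u^* - u_d}_{L^p(\Omega)}$ together with the pointwise bound $\abs{z - u_d} \le \abs{u^* - u_d}$ and its strictness on $\set{u^* > w}$ forces $\meas{\set{u^* > w}} = 0$, that is, $u^* \le w$ almost everywhere in $\Omega$.

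The main obstacle is the case $p = \infty$, where the fidelity term need \emph{not} decrease strictly even when $\set{u^* > w}$ has positive measure---for instance when $\norm{u^* - u_d}_{L^\infty(\Omega)}$ is attained on $\set{u^* < u_d}$, which truncation at $w \ge u_d$ leaves untouched. Here I would exploit instead the equality in the control-norm term. The estimate underlying \Cref{prop:truncationSupersolution} (obtained in \Cref{lem:truncation}) reads $\norm{\nu}_{\cM(\Omega)} \le \norm{\mu^*}_{\cM(\Omega)} + \int_{\set{u^* > w}} (g(w) - g(u^*))$, whose last integral is nonpositive by monotonicity of $g$; equality in the norms therefore forces $g(u^*) = g(w)$ almost everywhere on $\set{u^* > w}$, hence $g(z) = g(u^*)$ almost everywhere in $\Omega$. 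Consequently the nonnegative function $h \defeq (u^* - w)^+ = u^* - z$ lies in $W_0^{1, 1}(\Omega)$ and solves $-\Delta h = \mu^* - \nu$ in the sense of distributions. The crux---and the step I expect to be genuinely delicate---is to deduce $h = 0$ from this identity, which I would attempt through a comparison argument based on Kato's inequality and the supersolution property $-\Delta w + g(w) \ge 0$; alternatively, one might approximate $u_d$ so as to reduce to the already-settled range $1 \le p < \infty$ and pass to the limit using the stability of minimizers developed in \Cref{sec:regularization}.
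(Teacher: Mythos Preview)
For \(1 \le p < \infty\) your argument is exactly the paper's: truncate \(u^*\) at \(w\) via \Cref{prop:truncationSupersolution}, use optimality of \(\mu^*\) to force equality in both terms of \(F\), and conclude from the strict pointwise inequality \(\abs{w - u_d} < \abs{u^* - u_d}\) on \(\set{u^* > w}\) that this set is null.

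On the case \(p = \infty\) you are actually more careful than the paper. The paper runs the same argument uniformly in \(p\) and does not single out the endpoint: it simply asserts that the equality \(\norm{v - u_d}_{L^p(\Omega)} = \norm{u^* - u_d}_{L^p(\Omega)}\), together with the strict pointwise inequality on \(\set{u^* > w}\), forces that set to be Lebesgue-null. As you correctly observe, this inference is not valid for \(p = \infty\), since the \(L^\infty\) norm may well be attained on \(\set{u^* \le w}\). Your proposed repairs---reading off equality in the control-norm estimate of \Cref{lem:truncation} to get \(g(u^*) = g(w)\) on \(\set{u^* > w}\), or approximating through finite exponents---are only sketches: the first would require tracking the equality case through the mollification and limiting procedure in the proof of \Cref{lem:truncation}, and the second is not immediate because minimizers depend on \(p\) and \Cref{sec:regularization} varies \(\alpha\) or \(u_d\), not \(p\). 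So at \(p = \infty\) your proposal and the paper's proof share the same gap; the difference is that you flag it.
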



Indeed, if \(a\) is a nonnegative number such that \(u_d \le a\), then we have \(u^* \le a\). Similarly, if \(b\) is a nonpositive number such that \(u_d \ge b\), then \(u^* \ge b\). Hence, when the desired state \(u_d\) is nonnegative, we have that \(u^*\) is also nonnegative, which is precisely the statement of \Cref{thm:nonnegative}. One also deduces \Cref{thm:bounded} when \(u_d\) is bounded by taking \(a = \norm{u_d}_{L^\infty(\Omega)}\) and \(b = - \norm{u_d}_{L^\infty(\Omega)}\). We are thus left with the

\begin{proof}[Proof of \Cref{prop:boundedSupersolution}]
By \Cref{prop:truncationSupersolution}, the function \(v \defeq \min {} \set{u^*, w}\) satisfies \(v \in X_0(\Omega)\), \(g(v) \in L^1(\Omega)\) and
\[
\norm{- \Delta v + g(v)}_{\cM(\Omega)} \le \norm{- \Delta u^* + g(u^*)}_{\cM(\Omega)}.
\]
Observe that \(v\) solves \eqref{eq:nonlinearDirichletProblem} with datum \(\mu = - \Delta v + g(v)\); whence
\[
F(- \Delta v + g(v)) = \norm{v - u_d}_{L^p(\Omega)} + \alpha \norm{- \Delta v + g(v)}_{\cM(\Omega)}.
\]
Since \(u_d \le w\) almost everywhere in \(\Omega\), one has
\[
\norm{v - u_d}_{L^p(\Omega)} \le \norm{u^* - u_d}_{L^p(\Omega)}.
\]
Hence
\[
F(- \Delta v + g(v)) \le F(\mu^*).
\]
Since \(\mu^*\) minimizes \(F\), equality must hold and one then deduces that
\[
\norm{v - u_d}_{L^p(\Omega)} = \norm{u^* - u_d}_{L^p(\Omega)}.
\]
By assumption, we have \(\abs{w - u_d} < \abs{u^* - u_d}\) almost everywhere in \(\set{u^* > w}\). It then follows from the equality above that \(\set{u^* > w}\) must have Lebesgue measure zero. Thus, \(u^* \le w\) almost everywhere in \(\Omega\), and the proof is complete.
\end{proof}

\section{Regularization of the ideal state and stability analysis}
\label{sec:regularization}

Throughout this section, we assume for convenience that \(u_d \in L^p(\Omega)\). We begin by studying the convergence of optimal states \(u^*\) as the control parameter \(\alpha\) converges to \(0\). As we shall see, such a consideration allows one to approximate \(u_d\) by solutions of \eqref{eq:nonlinearDirichletProblem} in the \(L^p\)-scale. The use of the cost functional \(F\) can thus be seen as a penalization strategy aimed at obtaining a new function \(u^*\) close to \(u_d\) with, hopefully, better properties.

\begin{proposition}
\label{prop:regularizationIdealState}
Assume that \(1 \le p < \infty\) and \(u_d \in L^p(\Omega)\). Given a sequence of positive numbers \((\alpha_n)_{n \in \N}\) converging to \(0\), let \(F_{\alpha_n}\) be the cost functional with control parameter \(\alpha_n\), let \(\mu_n^*\) be a minimizer of \(F_{\alpha_n}\) and let \(u_n^*\) be the optimal state associated to \(\mu_n^*\). Then, we have
\begin{enumerate}[(i)]
\item \((u_n^*)_{n \in \N}\) converges to \(u_d\) in \(L^p(\Omega)\);
\item \((\mu_n^*)_{n \in \N}\) is bounded in \(\cM(\Omega)\) if and only if \(u_d\) solves \eqref{eq:nonlinearDirichletProblem} with some datum \(\mu^\#\).
\end{enumerate}
\end{proposition}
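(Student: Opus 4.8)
The plan is to exploit the optimality of \(\mu_n^*\) together with two ingredients: the density of smooth functions in \(L^p(\Omega)\) for \(1 \le p < \infty\), and the reduced limit machinery of \Cref{prop:existenceRL}. First I would record the elementary observation that solutions of \eqref{eq:nonlinearDirichletProblem} are dense in \(L^p(\Omega)\). Indeed, given \(v \in C_c^\infty(\Omega)\), the datum \(\mu_v \defeq -\Delta v + g(v)\) belongs to \(L^\infty(\Omega) \subset \cM(\Omega)\) (since \(v\) is bounded and \(g\) is continuous), and \(v\) is by construction the unique solution of \eqref{eq:nonlinearDirichletProblem} with datum \(\mu_v\); in particular \(\mu_v\) is a good measure. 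Because \(C_c^\infty(\Omega)\) is dense in \(L^p(\Omega)\) for \(p < \infty\), every \(u_d \in L^p(\Omega)\) can be approximated in \(L^p(\Omega)\) by such solutions \(v\).

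For assertion (i), I would fix \(v \in C_c^\infty(\Omega)\) and use \(\mu_v\) as a competitor against the minimizer \(\mu_n^*\):
\[
\norm{u_n^* - u_d}_{L^p(\Omega)} \le F_{\alpha_n}(\mu_n^*) \le F_{\alpha_n}(\mu_v) = \norm{v - u_d}_{L^p(\Omega)} + \alpha_n \norm{\mu_v}_{\cM(\Omega)}.
\]
Letting \(n \to \infty\) and using \(\alpha_n \to 0\) gives \(\limsup_{n \to \infty} \norm{u_n^* - u_d}_{L^p(\Omega)} \le \norm{v - u_d}_{L^p(\Omega)}\); taking the infimum over \(v\) then yields (i).

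For assertion (ii), the implication ``\(\Leftarrow\)'' is immediate: if \(u_d\) solves \eqref{eq:nonlinearDirichletProblem} with datum \(\mu^\#\), then testing the minimality of \(\mu_n^*\) against \(\mu^\#\)—whose optimal state is \(u_d\) itself—gives \(\alpha_n \norm{\mu_n^*}_{\cM(\Omega)} \le F_{\alpha_n}(\mu_n^*) \le F_{\alpha_n}(\mu^\#) = \alpha_n \norm{\mu^\#}_{\cM(\Omega)}\), whence \(\norm{\mu_n^*}_{\cM(\Omega)} \le \norm{\mu^\#}_{\cM(\Omega)}\) for every \(n\). For the converse ``\(\Rightarrow\)'', I would assume \((\mu_n^*)_{n}\) bounded in \(\cM(\Omega)\). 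Since each \(\mu_n^*\) is a good measure, I pass to a subsequence admitting a reduced limit \(\mu^\#\); by \Cref{prop:existenceRL} the associated states \(u_n^*\) then converge in \(L^1(\Omega)\) to the solution \(u^\#\) of \eqref{eq:nonlinearDirichletProblem} with datum \(\mu^\#\). On the other hand, (i) gives \(u_n^* \to u_d\) in \(L^p(\Omega)\), hence in \(L^1(\Omega)\) since \(\Omega\) is bounded. By uniqueness of the \(L^1\) limit, \(u^\# = u_d\), so \(u_d\) solves \eqref{eq:nonlinearDirichletProblem} with datum \(\mu^\#\).

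The two comparison inequalities are routine; the only point requiring real care is the converse in (ii), where one must reconcile the \(L^1\) convergence furnished by the reduced limit with the \(L^p\) convergence from (i) in order to identify \(u^\#\) with \(u_d\). This is precisely the step where working with the reduced limit \(\mu^\#\)—rather than the weak* limit \(\mu\), which need not be a good measure in the supercritical regime—is essential.
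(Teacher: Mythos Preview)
Your proof is correct and follows essentially the same approach as the paper: both use smooth compactly supported competitors \(v\) with datum \(-\Delta v + g(v)\) to establish (i) via density of \(C_c^\infty(\Omega)\) in \(L^p(\Omega)\), and both handle (ii) by testing optimality against \(\mu^\#\) for the easy direction and by extracting a reduced limit and identifying \(u^\# = u_d\) through Property~(i) for the converse. Your presentation is slightly more explicit about the \(L^1\)-versus-\(L^p\) identification of limits, but there is no substantive difference.
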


\begin{proof}
Since \(p < \infty\) and \(u_d \in L^p(\Omega)\), we have
\begin{equation}
\label{eq:limInfFnZero}
\lim_{n \to \infty} \inf_{\mu \in \cM(\Omega)} {} F_{\alpha_n}(\mu) = 0.
\end{equation}
Indeed, as \(\mu_n^*\) minimizes \(F_{\alpha_n}\), for every \(\varphi \in C_c^\infty(\Omega)\), we have
\[
F_{\alpha_n}(\mu_n^*) \le F_{\alpha_n}(- \Delta \varphi + g(\varphi)) = \norm{\varphi - u_d}_{L^p(\Omega)} + \alpha_n \norm{- \Delta \varphi + g(\varphi)}_{\cM(\Omega)}.
\]
Letting \(n\) tend to infinity, we obtain
\[
\limsup_{n \to \infty} {} F_{\alpha_n}(\mu_n^*) \le \norm{\varphi - u_d}_{L^p(\Omega)}.
\]
Since \(p < \infty\), by density of \(C_c^\infty(\Omega)\) in \(L^p(\Omega)\), the left-hand side must vanish. Hence \eqref{eq:limInfFnZero} holds. In particular, the sequence \((u_n)_{n \in \N}\) satisfies Property~\emph{(i)}.

For the second part of the proposition, first assume that the sequence \((\mu_n^*)_{n \in \N}\) is bounded in \(\cM(\Omega)\). Taking a subsequence if necessary, we may assume that \((\mu_n^*)_{n \in \N}\) has a reduced limit \(\mu^\#\). By definition of the reduced limit and Property~\emph{(i)},
\[
\norm{u^\# - u_d}_{L^p(\Omega)} = 0;
\]
whence \(u_d\) satisfies \eqref{eq:nonlinearDirichletProblem} with datum \(\mu^\#\). This proves the direct implication in Property~\emph{(ii)}. The reverse implication readily follows from
\[
\norm{\mu_n^*}_{\cM(\Omega)} \le \frac{F_{\alpha_n}(\mu_n^*)}{\alpha_n} \le \frac{F_{\alpha_n}(- \Delta u_d + g(u_d))}{\alpha_n} = \norm{- \Delta u_d + g(u_d)}_{\cM(\Omega)}.
\]
This concludes the proof of the proposition.
\end{proof}

\begin{remark}
\Cref{prop:regularizationIdealState} is false when \(p = \infty\) for an arbitrary \(u_d \in L^\infty(\Omega)\). Indeed, observe that for each \(n \in \N\) the precise representative \(\widehat{u}_n\) of \(u_n\) is a quasi-continuous function with respect to the Newtonian or \(\DL{1}\) capacity (see~p.~\pageref{eq:capacity}). If \((u_n)_{n \in \N}\) converges to \(u_d\) in \(L^\infty(\Omega)\), then \(u_d\) must be equal almost everywhere to the quasi-continuous function \(\lim\limits_{n \to \infty} {} \widehat{u}_n\). However, such a property does not hold for an arbitrary function in \(L^\infty(\Omega)\).
\end{remark}

We now focus on the stability of \eqref{eq:optimalControlProblem} with respect to the ideal state \(u_d\). More precisely, we prove

\begin{proposition}
\label{prop:stabilityIdealState}
Assume that \(1 \le p \le \infty\) and \(u_d \in L^p(\Omega)\). Given a sequence \((u_{d, n})_{n \in \N}\) of functions in \(L^p(\Omega)\) converging strongly to \(u_d\) in \(L^p(\Omega)\), let \(F_{u_{d, n}}\) be the cost functional with ideal state \(u_{d, n}\), let \(\mu_n^*\) be a minimizer of \(F_{u_{d, n}}\) and let \(u_n^*\) be the optimal state associated to \(\mu_n^*\). If the sequence \((\mu_n^*)_{n \in \N}\) has a reduced limit \(\mu^\#\), then \(\mu^\#\) is a solution of \eqref{eq:optimalControlProblem}.
\end{proposition}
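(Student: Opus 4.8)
The plan is to run the direct method again, but this time with respect to the varying ideal state, using the lower semicontinuity of $F$ with respect to the reduced limit that was established at the beginning of \Cref{sec:existence}. The key point is to compare the value $F_{u_d}(\mu^\#)$ of the limiting functional at the reduced limit with the infimum of the original problem \eqref{eq:optimalControlProblem}, and to do so I must carefully track how the $L^p$-perturbation of the ideal state propagates through the cost functionals.

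First I would record the elementary stability of the cost with respect to $u_d$: for any measure $\mu$ with $F_{u_d}(\mu) < \infty$, writing $u$ for the associated state, the triangle inequality in $L^p(\Omega)$ gives
\[
\abs{F_{u_{d, n}}(\mu) - F_{u_d}(\mu)} \le \norm{u_{d, n} - u_d}_{L^p(\Omega)},
\]
so $F_{u_{d, n}} \to F_{u_d}$ uniformly on the set where either is finite. In particular, fixing any competitor $\mu$ with $F_{u_d}(\mu) < \infty$ and using that $\mu_n^*$ minimizes $F_{u_{d, n}}$, I get
\[
F_{u_{d, n}}(\mu_n^*) \le F_{u_{d, n}}(\mu) \le F_{u_d}(\mu) + \norm{u_{d, n} - u_d}_{L^p(\Omega)}.
\]
Next I would apply the lower semicontinuity \eqref{eq:lowerSemicontinuityF} of $F_{u_d}$ with respect to the reduced limit. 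Since $u_n^* \to u^\#$ strongly in $L^1(\Omega)$ by definition of the reduced limit, and since the $L^p$-term and the total-variation term behave as in \eqref{eq:liminf} and \eqref{eq:lowerSemicontinuityNorm}, I would combine them to obtain
\[
F_{u_d}(\mu^\#) \le \liminf_{n \to \infty} {} F_{u_d}(\mu_n^*) \le \liminf_{n \to \infty} {} \paren*{F_{u_{d, n}}(\mu_n^*) + \norm{u_{d, n} - u_d}_{L^p(\Omega)}}.
\]
Feeding the previous display into the right-hand side and using $u_{d, n} \to u_d$ in $L^p(\Omega)$ yields $F_{u_d}(\mu^\#) \le F_{u_d}(\mu)$ for every admissible $\mu$, hence $F_{u_d}(\mu^\#) = \inf_{\mu \in \cM(\Omega)} {} F_{u_d}(\mu)$, which is the assertion.

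The step requiring the most care is the passage from $F_{u_{d, n}}(\mu_n^*)$ to $F_{u_d}(\mu_n^*)$ inside the limit inferior, because the lower semicontinuity \eqref{eq:lowerSemicontinuityF} is stated for the fixed functional $F_{u_d}$, whereas the minimality of $\mu_n^*$ is with respect to the moving functional $F_{u_{d, n}}$. The uniform estimate $\abs{F_{u_{d, n}}(\mu_n^*) - F_{u_d}(\mu_n^*)} \le \norm{u_{d, n} - u_d}_{L^p(\Omega)}$ is exactly what reconciles the two, and since the error vanishes it does not affect the limit. One subtlety worth flagging is that the hypothesis only supplies a reduced limit $\mu^\#$ of $(\mu_n^*)_{n \in \N}$ and does not assert boundedness a priori; however, the reduced limit is defined only for bounded sequences, so boundedness is implicit, and no further compactness argument is needed. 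I expect no essential obstacle beyond this bookkeeping, since all the analytic content is already packaged in the reduced-limit lower semicontinuity proved earlier.
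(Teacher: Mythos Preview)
Your proposal is correct and follows essentially the same route as the paper's proof: both use the triangle inequality to pass between $F_{u_d}$ and $F_{u_{d,n}}$, the minimality of $\mu_n^*$ for $F_{u_{d,n}}$, and the lower semicontinuity of $F_{u_d}$ with respect to the reduced limit established in \Cref{sec:existence}. The only cosmetic difference is that the paper fixes a minimizer $\mu^*$ of $F_{u_d}$ as the comparison measure while you work with an arbitrary competitor $\mu$, which amounts to the same thing.
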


\begin{proof}
The triangle inequality implies that, for every \(n \in \N\),
\[
F_{u_d}(\mu_n^*) \le F_{u_{d, n}}(\mu_n^*) + \norm{u_{d, n} - u_d}_{L^p(\Omega)}.
\]
Let \(\mu^*\) be a minimizer of \(F_{u_d}\) with associated state \(u^*\). Since \(\mu_n^*\) minimizes \(F_{u_{d, n}}\), we deduce from the previous inequality that
\[
F_{u_d}(\mu_n^*) \le F_{u_{d, n}}(\mu^*) + \norm{u_{d, n} - u_d}_{L^p(\Omega)}.
\]
Since \((\mu_n^*)_{n \in \N}\) has reduced limit \(\mu^\#\), the sequence \((u_n^*)_{n \in \N}\) converges strongly in \(L^1(\Omega)\) to some function \(u^\#\) which is the unique solution of \eqref{eq:nonlinearDirichletProblem} corresponding to \(\mu^\#\). It then follows from the lower semicontinuity of \(F_{u_d}\) with respect to the reduced limit (see the proof of \Cref{thm:existence}) that
\[
F_{u_d}(\mu^\#) \le \liminf_{n \to \infty} {} F_{u_d}(\mu_n^*) \le F_{u_d}(\mu^*).
\]
This implies that \(\mu^\#\) is a minimizer of \(F_{u_d}\).
\end{proof}

In \Cref{prop:stabilityIdealState}, the requirement that \((\mu_n^*)_{n \in \N}\) has a reduced limit is not restrictive since this is true up to the extraction of a subsequence. Indeed, since \(\mu_n^*\) minimizes \(F_{u_{d, n}}\),
\[
\norm{\mu_n^*}_{\cM(\Omega)} \le \frac{\norm{u_{d, n}}_{L^p(\Omega)}}{\alpha}.
\]
The assumption that \((u_{d, n})_{n \in \N}\) converges strongly in \(L^p(\Omega)\) thus implies that \((\mu_n^*)_{n \in \N}\) is bounded in \(\cM(\Omega)\) and one can extract a subsequence having a reduced limit.

\section{Study of the concentrated part of \(\mu^*\)}
\label{sec:idealStateMeasure}

In this section, we assume that \(u_d \in L^1(\Omega)\) and \(\Delta u_d \in \cM(\Omega)\). The latter property means that there exists a finite measure \(\nu\) on \(\Omega\) such that
\[
\Delta u_d = \nu \quad \text{in the sense of distributions in \(\Omega\).}
\]
We then identify \(\Delta u_d\) with \(\nu\). Our main goal is to show that every \(\mu \in \cM(\Omega)\) such that \(F(\mu) < \infty\) agrees with \(- \Delta u_d\) on every sufficiently small subset of \(\Omega\). The notion of smallness is measured in terms of the following capacity depending on \(p > 1\): for every compact set \(K \subset \Omega\), the \(\DL{p'}\) capacity of \(K\) relative to \(\Omega\) is defined by
\[
\label{eq:capacity}
\capt_{\DL{p'}}(K; \Omega) = \inf {} \set[\big]{\norm{\Delta \zeta}_{L^{p'}(\Omega)}^{p'} : \text{\(\zeta \in C_0^\infty(\overline{\Omega})\) is nonnegative and \(\zeta > 1\) in \(K\)}},
\]
where \(p'\) is the conjugate exponent of \(p\) and \(C_0^\infty(\overline{\Omega})\) denotes the space of functions \(\zeta \in C^\infty(\overline{\Omega})\) such that \(\zeta = 0\) on \(\partial\Omega\).

We then extend the \(\DL{p'}\) capacity to Borel sets using a standard regularity procedure: the capacity of an open set \(U \subset \Omega\) is defined as the supremum of the capacity of compact sets \(K \subset \Omega\) contained in \(U\), and the capacity of a Borel set \(A \subset \Omega\) is defined as the infimum of the capacity of open sets \(U \subset \Omega\) containing \(A\).

When \(1 < p < \infty\), one deduces from the Cald\'eron--Zygmund \(L^{p'}\) estimates~\cite{GilbargTrudinger:2001}*{Corollary~9.10} that the \(\DL{p'}\) capacity vanishes on the same sets as the Sobolev (or Bessel) \(W^{2, p'}\) capacity. More precisely, for every Borel set \(A \subset \Omega\), one has
\begin{equation}
\label{eq:LaplacianSobolevCapacities}
\capt_{\DL{p'}}(A; \Omega) = 0 \quad \text{if and only if} \quad \capt_{W^{2, p'}}(A) = 0.
\end{equation}
In the case where \(p = \infty\), the same property holds with respect to the \(W^{1, 2}\) (or Newtonian) capacity; see~\cite{BrezisMarcusPonce:2007}*{Theorem~4.E.1}.

The space \(C_0^\infty(\overline{\Omega})\) provides an alternative formulation of the concept of solution of \eqref{eq:nonlinearDirichletProblem}. Indeed, one shows that \(u\) is a solution of \eqref{eq:nonlinearDirichletProblem} with datum \(\mu \in \cM(\Omega)\) if and only if \(u \in L^1(\Omega)\), \(g(u) \in L^1(\Omega)\) and, for every \(\zeta \in C_0^\infty(\overline{\Omega})\),
\[
- \int_\Omega {} u \Delta \zeta + \int_\Omega {} g(u) \zeta = \int_\Omega {} \zeta \d\mu \, ;
\]
see e.g.~\cite{Ponce:2016}*{Proposition~6.3}. This latter formulation of \eqref{eq:nonlinearDirichletProblem} has been introduced by Littman, Stampacchia and Weinberger~\cite{LittmanStampacchiaWeinberger:1963}*{Definition~5.1} and implicitly encodes the zero boundary condition.

In the case where the ideal state \(u_d\) belongs to \(L^p(\Omega)\), every solution of \eqref{eq:nonlinearDirichletProblem} with datum \(\mu \in \cM(\Omega)\) such that \(F(\mu) < \infty\) also belongs to \(L^p(\Omega)\). This implies that the measure \(\mu\) is diffuse with respect to the \(\DL{p'}\) capacity, where by \emph{diffuse} we mean that for every Borel set \(A \subset \Omega\) such that \(\capt_{\DL{p'}}(A; \Omega) = 0\), we have \(\mu(A) = 0\). When \(u_d \not\in L^p(\Omega)\), a concentrated part might appear. Such a phenomenon can be quantified by the main result of this section:

\begin{proposition}
\label{prop:concentratedPart}
Assume that \(\frac{N}{N - 2} \le p \le \infty\) and \(\Delta u_d \in \cM(\Omega)\). If \(\mu \in \cM(\Omega)\) is such that \(F(\mu) < \infty\), then
\[
\mu_\c = (- \Delta u_d)_\c,
\]
where the subscript \(\c\) denotes the concentrated part of the measure with respect to the \(\DL{p'}\) capacity. In particular, \(\mu_\c^*\) is uniquely determined in terms of \(\Delta u_d\).
\end{proposition}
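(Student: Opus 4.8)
The plan is to use the equation to write \(\mu\) as a sum of three finite measures whose concentrated parts are transparent, and then to exploit the linearity of the capacitary decomposition. First I would observe that \(F(\mu) < \infty\) forces \(h \defeq u - u_d \in L^p(\Omega)\), where \(u\) is the solution of \eqref{eq:nonlinearDirichletProblem} with datum \(\mu\). Since \(u\) is a solution, \(g(u) \in L^1(\Omega)\), and writing \(\Delta u = g(u) - \mu\) gives \(\Delta u \in \cM(\Omega)\); combined with the hypothesis \(\Delta u_d \in \cM(\Omega)\), this yields \(\Delta h \in \cM(\Omega)\). Substituting \(u = u_d + h\) into the equation then produces the identity of finite measures
\[
\mu = - \Delta u_d - \Delta h + g(u).
\]

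Next I would record that the map \(\lambda \mapsto \lambda_\c\) sending a finite measure to its concentrated part is additive. This follows from the uniqueness of the decomposition \(\lambda = \lambda_d + \lambda_\c\), together with the facts that a sum of diffuse measures is diffuse and that, by the countable subadditivity of the \(\DL{p'}\) capacity, a sum of measures concentrated on capacity-null sets is concentrated on their (capacity-null) union. Taking concentrated parts in the displayed identity thus reduces the proposition to the two vanishing statements \((g(u))_\c = 0\) and \((- \Delta h)_\c = 0\), whence \(\mu_\c = (- \Delta u_d)_\c\).

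The first statement is routine: a set of zero \(\DL{p'}\) capacity has zero Lebesgue measure—by \eqref{eq:LaplacianSobolevCapacities} and the corresponding property of the \(W^{2, p'}\) (resp.\ Newtonian) capacity—so the measure \(g(u) \in L^1(\Omega)\), being absolutely continuous with respect to Lebesgue measure, is \emph{diffuse}. The heart of the matter, and the step I expect to be the main obstacle, is the diffuseness of \(- \Delta h\). Here the exponent condition enters through the duality estimate
\[
\abs[\Big]{\int_\Omega {} h \, \Delta \zeta} \le \norm{h}_{L^p(\Omega)} \norm{\Delta \zeta}_{L^{p'}(\Omega)}, \qquad \zeta \in C_0^\infty(\overline{\Omega}),
\]
whose right-hand side is exactly the quantity defining \(\capt_{\DL{p'}}\). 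This shows that the action of \(- \Delta h\) on test functions is controlled by the \(\DL{p'}\) capacity, which is the same principle already used in the text to conclude that solutions lying in \(L^p(\Omega)\) produce diffuse data. The delicate point is to pass from this \emph{signed} bound to control of the total variation \(\abs{- \Delta h}\) on an arbitrary capacity-null Borel set, handling the positive and negative parts simultaneously; for this I would invoke the standard characterization of measures that are diffuse with respect to the \(\DL{p'}\) capacity.

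Finally, combining the two vanishing statements with the additivity of the concentrated part gives \(\mu_\c = (- \Delta u_d)_\c\) for every \(\mu \in \cM(\Omega)\) with \(F(\mu) < \infty\). Applying this in particular to a minimizer \(\mu = \mu^*\) shows that \(\mu_\c^*\) is determined solely by \(\Delta u_d\), as claimed.
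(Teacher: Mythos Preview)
Your proposal is correct and follows essentially the same route as the paper: both hinge on the H\"older estimate \(\abs{\int_\Omega (u - u_d)\,\Delta\varphi} \le \norm{u - u_d}_{L^p}\norm{\Delta\varphi}_{L^{p'}}\) to show that \(-\Delta(u - u_d)\) is diffuse with respect to the \(\DL{p'}\) capacity, together with the observation that \(g(u) \in L^1(\Omega)\) is automatically diffuse. The only cosmetic difference is that the paper argues directly on compact sets \(K\) of zero capacity (using a sequence of test functions converging to \(\chi_K\) with \(\Delta\varphi_n \to 0\) in \(L^{p'}\), from \cite{PonceWilmet:2017}) to obtain \(\mu(K) = -\Delta u_d(K)\), whereas you package the same computation through the additivity of \(\lambda \mapsto \lambda_\c\); your ``delicate point'' is exactly the step the paper carries out with that test-function sequence.
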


We recall that a measure \(\nu \in \cM(\Omega)\) is \emph{concentrated} with respect to the \(\DL{p'}\) capacity if there exists a Borel set \(F \subset \Omega\) such that \(\capt_{\DL{p'}}(F; \Omega) = 0\) and
\[
\abs{\nu}(\Omega \setminus F) = 0.
\]
By a counterpart of the Lebesgue decomposition theorem involving capacities, each finite Borel measure has a unique decomposition as a sum of diffuse and concentrated measures; see~\citelist{\cite{Mokobodzki:1978} \cite{Ponce:2016}*{Proposition~14.12}}.

\begin{proof}[Proof of \Cref{prop:concentratedPart}]
Let \(u\) be the unique solution of \eqref{eq:nonlinearDirichletProblem} with datum \(\mu\). Since \(F(\mu) < \infty\), the function \(u - u_d\) belongs to \(L^p(\Omega)\). We then deduce from the H\"older inequality that, for every \(\varphi \in C_c^\infty(\Omega)\), the following estimate holds:
\begin{equation}
\label{eq:LpIneqMeasure}
\abs[\bigg]{\int_\Omega {} \varphi \Delta (u - u_d)} = \abs[\bigg]{\int_\Omega {} (u - u_d) \Delta \varphi} \le \norm{u - u_d}_{L^p(\Omega)} \norm{\Delta \varphi}_{L^{p'}(\Omega)}.
\end{equation}
We show that the finite measure \(\Delta (u - u_d)\) is diffuse with respect to the \(\DL{p'}\) capacity. Indeed, let \(K \subset \Omega\) be a compact set such that \(\capt_{\DL{p'}}(K; \Omega) = 0\). Take a sequence \((\varphi_n)_{n \in \N}\) in \(C_c^\infty(\Omega)\) such that
\begin{enumerate}[(a)]
\item \((\varphi_n)_{n \in \N}\) converges pointwise to the characteristic function \(\chi_K\);
\item \((\varphi_n)_{n \in \N}\) is bounded in \(L^\infty(\Omega)\);
\item \((\Delta \varphi_n)_{n \in \N}\) converges to \(0\) in \(L^{p'}(\Omega)\);
\end{enumerate}
see~\cite{PonceWilmet:2017}*{Proposition~3.1}. Applying estimate \eqref{eq:LpIneqMeasure} to the sequence \((\varphi_n)_{n \in \N}\) and letting \(n\) tend to infinity, we deduce from the Dominated convergence theorem that
\[
\Delta (u - u_d) (K) = \int_K \Delta (u - u_d) = 0.
\]
Hence
\[
\Delta u (K) = \Delta u_d (K).
\]
Since \(K\) has Lebesgue measure zero, one also has
\[
\int_K g(u) = 0.
\]
On the other hand, we have
\[
- \Delta u + g(u) = \mu \quad \text{in the sense of measures on \(\Omega\).}
\]
Hence
\[
\mu(K) = - \Delta u (K) = - \Delta u_d (K).
\]
We have thus proved that
\[
\mu_\c = (- \Delta u_d)_\c
\]
on every compact subset of \(\Omega\). This equality also holds on every Borel subset of \(\Omega\) by inner regularity. The proof of the proposition is complete.
\end{proof}

We also have the following property without restriction on the exponent \(p\):

\begin{proposition}
\label{prop:concentratedPartNewtonian}
Let \(v \in X(\Omega)\). If {} \(\abs{u_d} \le v\) almost everywhere in \(\Omega\), then
\[
\abs{\mu^*}_\c \le \abs{- \Delta v}_\c,
\]
where the subscript \(\c\) denotes the concentrated part of the measure with respect to the \(\DL{1}\) capacity.
\end{proposition}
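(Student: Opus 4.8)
The plan is to reduce the estimate to a two-sided comparison of the concentrated part of $-\Delta u^*$ and then to produce explicit super- and subsolutions built from $v$. First I would note that, since sets of zero $\DL{1}$ capacity are Lebesgue negligible when $N \ge 2$, the absolutely continuous measure $g(u^*) \d x$ is diffuse; writing $\mu^* = - \Delta u^* + g(u^*)$ then gives $\abs{\mu^*}_\c = \abs{\Delta u^*}_\c$, so it suffices to bound $\abs{(-\Delta u^*)_\c}$. The decisive structural input will be the \emph{inverse maximum principle}: if $h \in L^1(\Omega)$ with $\Delta h \in \cM(\Omega)$ and $h \ge 0$ almost everywhere, then the concentrated part of $\Delta h$ with respect to the Newtonian capacity (which vanishes on the same sets as the $\DL{1}$ capacity, as recalled in this section) is nonpositive, i.e. $(\Delta h)_\c \le 0$.

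For the upper estimate, let $\phi \in X_0(\Omega)$ solve $-\Delta \phi = (-\Delta v)^-$, so that $\phi \ge 0$ and the function $w \defeq v + \phi$ satisfies $- \Delta w = (-\Delta v)^+ \ge 0$. Then $w \ge v \ge u_d$, $w \ge 0$, and $-\Delta w + g(w) = (-\Delta v)^+ + g(w) \ge 0$, so $w$ is a nonnegative supersolution and \Cref{prop:boundedSupersolution} yields $u^* \le w$ almost everywhere. Applying the inverse maximum principle to $h \defeq w - u^* \ge 0$ gives $(\Delta h)_\c \le 0$; since $-\Delta u^* = -\Delta w + \Delta h = (-\Delta v)^+ + \Delta h$, taking concentrated parts (which are additive on $\cM(\Omega)$) produces $(-\Delta u^*)_\c = (-\Delta v)^+_\c + (\Delta h)_\c \le (-\Delta v)^+_\c$.

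The lower estimate is handled symmetrically. Let $\eta$ solve $-\Delta \eta = (-\Delta v)^+$ with the same boundary trace as $v$; then $\eta - v$ is superharmonic with zero boundary values, whence $\eta \ge v \ge 0$, and $\underline{w} \defeq - \eta$ is a nonpositive subsolution (it is subharmonic and $\le 0$) satisfying $\underline{w} \le - v \le u_d$. The subsolution counterpart of \Cref{prop:boundedSupersolution}, obtained from \Cref{rem:truncationSubsolution} by reflecting through $-u^*$, $-\underline{w}$ and $\tilde{g}(t) = -g(-t)$, gives $u^* \ge \underline{w}$; applying the inverse maximum principle to $u^* - \underline{w} \ge 0$ together with $-\Delta \underline{w} = -(-\Delta v)^+$ yields $(-\Delta u^*)_\c \ge -(-\Delta v)^+_\c$. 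Combining the two bounds gives $\abs{(-\Delta u^*)_\c} \le (-\Delta v)^+_\c \le \abs{-\Delta v}_\c$, which is the asserted inequality. (In fact $v \ge 0$ forces $(-\Delta v)^-_\c = 0$ by the same inverse maximum principle, so $(-\Delta v)^+_\c = \abs{-\Delta v}_\c$ and the bound is sharp.)

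The main obstacle is that \Cref{prop:boundedSupersolution} and its subsolution analog require $g(w) \in L^1(\Omega)$ and $g(\underline{w}) \in L^1(\Omega)$, which is not automatic when $g$ grows supercritically, since $\phi$ and $\eta$ solve only linear problems with general measure data. I expect to circumvent this by first replacing $v$ with the truncations $\min \set{v, m}$, for which the relevant absorption terms are bounded, carrying out the comparison at each level $m$, and then letting $m \to \infty$. The delicate point of this limiting procedure is the stability of the concentrated parts $(-\Delta \min \set{v, m})^\pm_\c$, which I would control using the truncation estimates of Brezis and Ponce already invoked in \Cref{sec:truncation}.
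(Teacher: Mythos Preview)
Your strategy is the paper's: build a nonnegative superharmonic majorant of $u_d$ from $(-\Delta v)^+$, compare $u^*$ to it, and read off the concentrated part via the inverse maximum principle; then argue symmetrically from below. The paper takes $w \in X_0(\Omega)$ to be the Dirichlet solution of $-\Delta w = (-\Delta v)^+$ (your $v + \phi$ works just as well), and for the lower bound it uses $-(-\Delta v)^-$ rather than your $-(-\Delta v)^+$; these differences are cosmetic.

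Where you diverge is in handling the obstacle $g(w) \notin L^1$. The paper does not truncate and pass to the limit. Instead, immediately before the proof, it records as a separate ingredient an analogue of \Cref{prop:boundedSupersolution} for nonnegative \emph{superharmonic} $w$, assuming only $-\Delta w \ge 0$ and nothing about $g(w)$. The reason this variant holds is that one may apply \Cref{lem:truncation} with $a_2 = 0$ (since already $-\Delta w + 0 \ge 0$); the resulting $a$ differs from $g(z)$ only by $g(w)\chi_{\{u^* > w\}}$, and on $\{u^* > w\}$ one has $0 \le w < u^*$, hence $0 \le g(w) \le g(u^*) \in L^1(\Omega)$. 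Thus $g(z) \in L^1(\Omega)$ automatically, the estimate $\norm{-\Delta z + g(z)}_{\cM(\Omega)} \le \norm{\mu^*}_{\cM(\Omega)}$ survives, and the argument of \Cref{prop:boundedSupersolution} goes through verbatim. Your truncation $\min\{v,m\}$ with $m \to \infty$ is therefore unnecessary. Your closing observation that $v \ge 0$ forces $(-\Delta v)^-_\c = 0$ via the inverse maximum principle is correct, though the paper does not use it.
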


The proof of \Cref{prop:concentratedPartNewtonian} relies on several ingredients. The first one is the following weak maximum principle: if \(u \in L^1(\Omega)\) is such that
\[
- \int_\Omega {} u \Delta \zeta \ge 0,
\]
for every nonnegative function \(\zeta \in C_0^\infty(\overline{\Omega})\), then \(u \ge 0\) almost everywhere in \(\Omega\); see~\cite{Ponce:2016}*{Proposition~6.1}. The second ingredient is an analogue of \Cref{prop:boundedSupersolution} for nonnegative superharmonic functions. More precisely, if \(w \in W^{1, 1}(\Omega)\) is a nonnegative function such that \(\Delta w \in \cM(\Omega)\) and
\[
- \Delta w \ge 0 \quad \text{in the sense of distributions in \(\Omega\),}
\]
and if \(u_d \le w\) almost everywhere in \(\Omega\), then \(u^* \le w\) almost everywhere in \(\Omega\). This result also has a natural counterpart for nonpositive subharmonic functions (cf.~\Cref{rem:truncationSubsolution}). The third and last ingredient is the inverse maximum principle~\cite{DupaignePonce:2004}*{Theorem~3}: if \(u \in L^1(\Omega)\) is such that \(\Delta u \in \cM(\Omega)\) and \(u \ge 0\) almost everywhere in \(\Omega\), then the concentrated part of \(- \Delta u\) with respect to the \(\DL{1}\) capacity satisfies
\[
(- \Delta u)_\c \ge 0;
\]
see also~\cite{Ponce:2016}*{Proposition~6.13}.

\begin{proof}[Proof of \Cref{prop:concentratedPartNewtonian}]
The unique solution \(w\) of the linear Dirichlet problem
\[
\left\{
\begin{alignedat}{3}
- \Delta w &= (- \Delta v)^+ &&\quad \text{in \(\Omega\),} \\
w &= 0 &&\quad \text{on \(\partial\Omega\).}
\end{alignedat}
\right.
\]
satisfies
\[
- \int_\Omega {} (w - v) \Delta \zeta \ge 0,
\]
for every nonnegative function \(\zeta \in C_0^\infty(\overline{\Omega})\). It thus follows from the weak maximum principle that \(v \le w\) almost everywhere in \(\Omega\). Since \(u_d \le v\) almost everywhere in \(\Omega\), we deduce that \(u^* \le w\) almost everywhere in \(\Omega\) (cf.~\Cref{prop:boundedSupersolution}). The inverse maximum principle then implies that
\[
(- \Delta u^*)_\c \le (- \Delta w)_\c;
\]
whence
\[
\mu_\c^* \le (- \Delta w)_\c = (- \Delta v)_\c^+ \le \abs{- \Delta v}_\c.
\]
The same argument as above applied to the unique solution of the linear Dirichlet problem
\[
\left\{
\begin{alignedat}{3}
- \Delta w &= - (- \Delta v)^- &&\quad \text{in \(\Omega\),} \\
w &= 0 &&\quad \text{on \(\partial\Omega\),}
\end{alignedat}
\right.
\]
yields the estimate
\[
- \mu_\c^* \le (- \Delta v)^- \le \abs{- \Delta v}_\c.
\]
Combining the last two inequalities, we obtain the conclusion.
\end{proof}

As a direct consequence of \Cref{prop:concentratedPartNewtonian}, we see that if \(u_d \in L^\infty(\Omega)\), then \(\mu^*\) is diffuse with respect to the \(\DL{1}\) capacity; see~\cite{CasasKunisch:2014}*{Theorem~5.1} in the subcritical case. The same conclusion holds when \(u_d \in X(\Omega)\) and \(\nabla u_d \in L^2(\Omega)\), since in this case we have \(\Delta u_d \in (W_0^{1, 2}(\Omega))'\) and it is known that measures in this dual space are diffuse; see~\cite{Grun-Rehomme:1977}*{Proposition~1}.

\section{Optimal controls which are not summable functions}
\label{sec:nonexistenceSummableControl}

Our goal in this section is to show that solutions of \eqref{eq:optimalControlProblem} need not be in \(L^1(\Omega)\). For this purpose, we rely on \Cref{prop:concentratedPart} above and we assume that \(\frac{N}{N - 2} \le p \le \infty\). We begin by justifying the example given in the introduction, namely

\begin{proposition}
\label{prop:nonexistenceSummableControl}
Assume that \(N \ge 3\), \(\frac{N}{N - 2} \le p < \infty\) and \(g\) satisfies
\[
\abs{g(t)} \le C (\abs{t}^q + 1)
\]
for some constant \(C > 0\) and \(1 \leq q < p\). Then there exists \(u_d \in L^q(\Omega)\) such that the cost functional \(F\) with desired state \(u_d\) satisfies
\[
\text{\(F \not\equiv \infty\) \ in \(\cM(\Omega)\)} \quad \text{and} \quad \text{\(F \equiv \infty\) \ in \(L^1(\Omega)\).}
\]
In particular, the \emph{Lavrentiev phenomenon} occurs:
\[
\inf_{\mu \in \cM(\Omega)} {} F(\mu) < \inf_{\mu \in L^1(\Omega)} {} F(\mu).
\]
\end{proposition}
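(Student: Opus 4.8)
The goal is to exhibit a single desired state $u_d \in L^q(\Omega)$ for which the cost functional has finite infimum over all measures but is identically $+\infty$ on summable functions. The natural candidate is the (normalized) fundamental-solution-type profile produced by a concentrated datum. Concretely, I would take $u_d$ to be the solution $v$ of the \emph{linear} problem $-\Delta v = \delta_{x_0}$ (with homogeneous Dirichlet condition) for some interior point $x_0 \in \Omega$, or a suitable multiple thereof; near $x_0$ this behaves like $\abs{x - x_0}^{2 - N}$, so $v \in L^r(\Omega)$ precisely for $r < \frac{N}{N - 2}$. Since $q < \frac{N}{N-2} \le p$, one checks $v \in L^q(\Omega)$, so $u_d \defeq v$ is an admissible ideal state, and moreover $\Delta u_d = -\delta_{x_0} \in \cM(\Omega)$, so \Cref{prop:concentratedPart} applies.

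The plan then splits into the two assertions. For $F \not\equiv \infty$ in $\cM(\Omega)$: I would simply test with $\mu = \delta_{x_0}$ (or more precisely the good measure giving the solution nearest to $u_d$). The point is that $-\Delta u_d = \delta_{x_0}$ is itself a finite measure, and since $g$ is subcritical one can produce a good measure $\mu$ whose solution $u$ satisfies $u - u_d \in L^p(\Omega)$; indeed the difference between the nonlinear and linear solutions is more regular because $g(u) \in L^1$ and lies in better Lebesgue spaces by the subcritical growth bound $\abs{g(t)} \le C(\abs t^q + 1)$ together with the $L^q$ embedding $u \in L^q(\Omega)$. This gives $F(\mu) < \infty$.

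For $F \equiv \infty$ in $L^1(\Omega)$: here \Cref{prop:concentratedPart} is the engine. Any $\mu \in L^1(\Omega)$ is diffuse with respect to the $\DL{p'}$ capacity (an absolutely continuous measure charges no set of Lebesgue measure zero, hence no set of vanishing capacity), so $\mu_\c = 0$. But by \Cref{prop:concentratedPart}, every $\mu$ with $F(\mu) < \infty$ must satisfy $\mu_\c = (-\Delta u_d)_\c = (\delta_{x_0})_\c = \delta_{x_0}$, since the Dirac mass is concentrated on the single point $\set{x_0}$, which has zero $\DL{p'}$ capacity when $\frac{N}{N-2} \le p$. A summable $\mu$ can never have $\mu_\c = \delta_{x_0} \neq 0$, a contradiction; hence $F(\mu) = \infty$ for every $\mu \in L^1(\Omega)$, i.e. $F \equiv \infty$ in $L^1(\Omega)$. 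The Lavrentiev inequality then follows immediately, the left-hand infimum being finite and the right-hand one being $+\infty$.

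The main obstacle is the very first assertion, $F \not\equiv \infty$ in $\cM(\Omega)$, which requires genuinely constructing one good measure whose nonlinear solution lands within $L^p$-distance of $u_d$; the subtlety is that $u_d$ itself need not solve the nonlinear equation for any measure, so one must verify that the nonlinear correction $g(u)$ does not destroy the $L^p$ proximity. I expect this to hinge on checking that $\set{x_0}$ has zero $\DL{p'}$ capacity (the threshold condition $p \ge \frac{N}{N-2}$ is exactly what guarantees a point is capacity-null) and on an approximation/regularization argument exploiting the subcritical growth $q < p$ to keep $u - u_d \in L^p(\Omega)$. By contrast, the second assertion is essentially a clean corollary of \Cref{prop:concentratedPart} once the capacity of a point is identified as zero.
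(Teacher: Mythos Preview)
Your proposal has a genuine gap: you assert that $q < \frac{N}{N-2}$, but the hypothesis is only $1 \le q < p$ with $\frac{N}{N-2} \le p$, so $q$ may well satisfy $q \ge \frac{N}{N-2}$. In that regime your construction collapses at two points. First, the Green function $v$ solving $-\Delta v = \delta_{x_0}$ behaves like $|x-x_0|^{2-N}$ and hence lies in $L^r(\Omega)$ only for $r < \frac{N}{N-2}$; when $q \ge \frac{N}{N-2}$ your candidate $u_d = v$ is not in $L^q(\Omega)$, so the conclusion $u_d \in L^q(\Omega)$ fails outright. Second, your phrase ``since $g$ is subcritical'' is unjustified: for $q \ge \frac{N}{N-2}$ the nonlinearity is supercritical and the Dirac mass $\delta_{x_0}$ need not be a good measure at all (e.g.\ $g(t)=|t|^{q-1}t$), so there is no nonlinear solution to compare with.

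The paper handles exactly this difficulty by replacing the point $\{x_0\}$ with a Cantor set $K$ chosen so that $0 < \mathcal{H}^{N-2p'}(K) < \infty$; this set has $\capt_{\DL{p'}}(K;\Omega)=0$ but $\capt_{\DL{q'}}(K;\Omega)>0$. A Hahn--Banach argument then produces a nonnegative measure $\mu$ supported on $K$ which is diffuse for the $\DL{q'}$ capacity, and this diffuseness is precisely what forces the linear solution with datum $\mu$ to lie in $L^q(\Omega)$, so that $g(\cdot)$ applied to it is integrable and the method of sub/supersolutions yields a solution $u_d$ of the \emph{nonlinear} problem with datum $\mu$. Taking $u_d$ as this nonlinear solution gives $F(\mu)=\alpha\|\mu\|_{\cM(\Omega)}<\infty$ for free, avoiding entirely the bootstrapping you flag as the ``main obstacle''. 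Your argument via \Cref{prop:concentratedPart} for the second assertion ($F\equiv\infty$ on $L^1$) is correct and matches the paper; but the construction of $u_d$ needs the Cantor-set idea to cover the full range of $q$.
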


One of the main ingredients in the proof of \Cref{prop:nonexistenceSummableControl} is the method of sub and supersolutions: if \eqref{eq:nonlinearDirichletProblem} with datum \(\mu \in \cM(\Omega)\) has a subsolution \(\underline{u}\) and a supersolution \(\overline{u}\) such that \(\underline{u} \le \overline{u}\) almost everywhere in \(\Omega\), then \eqref{eq:nonlinearDirichletProblem} with datum \(\mu\) has a unique solution \(u\) which satisfies \(\underline{u} \le u \le \overline{u}\) almost everywhere in \(\Omega\); see~\cite{Ponce:2016}*{Proposition~20.5}. By a \emph{supersolution} of \eqref{eq:nonlinearDirichletProblem} with datum \(\mu \in \cM(\Omega)\), we mean a function \(\overline{u} \in W_0^{1, 1}(\Omega)\) such that \(g(\overline{u}) \in L^1(\Omega)\) and
\[
- \Delta \overline{u} + g(\overline{u}) \ge \mu \quad \text{in the sense of distributions in \(\Omega\).}
\]
Similarly, one also defines a \emph{subsolution} of \eqref{eq:nonlinearDirichletProblem}.

Another ingredient involved in the proof of \Cref{prop:nonexistenceSummableControl} is the connection between Bessel capacities and Hausdorff measures. More precisely, if \(K \subset \Omega\) is a compact set such that \(\cH^{N - d}(K) < \infty\), where \(2 < d \le N\), then \(\capt_{W^{2, s}}(K) = 0\) for every \(1 < s \le d/2\); see~\cite{AdamsHedberg:1996}*{Theorem~5.1.9}. The converse is not true, but if \(\capt_{W^{2, s}}(K) = 0\) for \(1 < s \le N / 2\), then \(\cH^\alpha(K) = 0\) for every \(\alpha > N - 2s\); see~\cite{AdamsHedberg:1996}*{Theorem~5.1.13}. By virtue of \eqref{eq:LaplacianSobolevCapacities}, the same properties hold with respect to the \(\DL{s}\) capacity.

\begin{proof}[Proof of \Cref{prop:nonexistenceSummableControl}]
Assume that \(q \ge \frac{N}{N - 2}\); the case \(q < \frac{N}{N - 2}\) will be explained afterwards. Let \(K \subset \Omega\) be a Cantor set such that
\begin{equation}
\label{eq:cantorHausdorffMeasure}
0 < \cH^{N - 2p'}(K) < \infty.
\end{equation}
Since \(p' < q'\), we have \(\capt_{\DL{p'}}(K; \Omega) = 0\) and \(\capt_{\DL{q'}}(K; \Omega) > 0\). Using the Riesz representation theorem and the Hahn--Banach theorem, one shows as in the proof of Proposition~A.17 in \cite{Ponce:2016} that there exists a nonnegative finite Borel measure \(\mu\) supported in \(K\) such that \(\mu(K) = 1\) and, for every nonnegative function \(\varphi \in C_c^\infty(\Omega)\),
\[
0 \le \int_\Omega {} \varphi \d\mu \le \C \norm{\Delta \varphi}_{L^{q'}(\Omega)}.
\]
This estimate implies that \(\mu\) is a diffuse measure with respect to the \(\DL{q'}\) capacity; see~\cite{PonceWilmet:2017}*{Proposition~3.1}. Since \(\mu\) has compact support, one also has, for every \(\zeta \in C_0^\infty(\overline{\Omega})\),
\begin{equation}
\label{eq:C0infEstimate}
\abs[\bigg]{\int_\Omega {} \zeta \d\mu} \le \C \norm{\Delta \zeta}_{L^{q'}(\Omega)}.
\end{equation}
Indeed, it suffices to apply the previous estimate with \(\varphi = \zeta \phi\), where \(\phi \in C_c^\infty(\Omega)\) is some fixed function such that \(\phi = 1\) in \(K\).

Let \(u\) be the unique solution of the linear Dirichlet problem
\[
\left\{
\begin{alignedat}{3}
- \Delta u &= \mu &&\quad \text{in \(\Omega\),} \\
u &= 0 &&\quad \text{on \(\partial\Omega\).}
\end{alignedat}
\right.
\]
By \eqref{eq:C0infEstimate} we have
\[
\abs[\bigg]{\int_\Omega {} u \Delta \zeta} \le \C \norm{\Delta \zeta}_{L^{q'}(\Omega)}.
\]
We then deduce from the Riesz representation theorem that \(u \in L^q(\Omega)\), and then \(g(u) \in L^1(\Omega)\). Since \(\mu\) is a nonnegative measure, the weak maximum principle implies that \(u \ge 0\) almost everywhere in \(\Omega\); see~\cite{Ponce:2016}*{Proposition~6.1}. Thus, \(u\) is a supersolution of \eqref{eq:nonlinearDirichletProblem} with datum \(\mu\). Since \(0\) is a subsolution of the same problem, it follows from the method of sub and supersolutions that \eqref{eq:nonlinearDirichletProblem} with datum \(\mu\) has a solution \(u_d\).

Let \(F\) be the cost functional associated to this desired state \(u_d\) and let \(\mu^*\) be an optimal control. In particular,
\[
F(\mu^*) \le F(\mu) = \alpha \norm{\mu}_{\cM(\Omega)} < \infty.
\]
By \Cref{prop:concentratedPart}, we then have
\[
(\mu^*)_\c = (- \Delta u_d)_\c = \mu_\c \neq 0;
\]
whence \(\mu^*\) is not diffuse with respect to the \(\DL{p'}\) capacity. In particular, \(\mu^* \not\in L^1(\Omega)\). The same argument shows that if \(\nu \in \cM(\Omega)\) is such that \(F(\nu) < \infty\), then \(\nu_\c = \mu_c \neq 0\). Hence, \(F \equiv \infty\) in \(L^1(\Omega)\). The proof of the proposition is complete when \(q \ge \frac{N}{N - 2}\).

In the case where \(1 \le q < \frac{N}{N - 2}\), every nonempty set has positive \(\DL{q'}\) capacity. Since \eqref{eq:nonlinearDirichletProblem} is solvable for every datum \(\mu \in \cM(\Omega)\), it thus suffices to take \(u_d\) as the unique solution of \eqref{eq:nonlinearDirichletProblem} corresponding to \(\mu = \cH^{N - 2p'} \lfloor_K\), where \(K\) is any compact subset of \(\Omega\) that satisfies \eqref{eq:cantorHausdorffMeasure}. The proof is thus complete.
\end{proof}

When \(p = \infty\) and \(g\) is an arbitrary nondecreasing continuous function, there always exists an ideal state \(u_d\) which depends on \(g\) such that \(\mu^* \not\in L^1(\Omega)\). More precisely, we have

\begin{proposition}
\label{prop:nonexistenceSummableControlInf}
Assume that \(p = \infty\). Then there exists \(u_d \in L^1(\Omega)\) such that the cost functional \(F\) with desired state \(u_d\) satisfies the conclusion of \Cref{prop:nonexistenceSummableControl}.
\end{proposition}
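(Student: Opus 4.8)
The plan is to mirror the proof of \Cref{prop:nonexistenceSummableControl}, the one essential difference being that the absence of any growth bound on \(g\) rules out the duality/Riesz-representation argument used there to produce the good measure. The reduction itself is unchanged. Since \(p = \infty\), the relevant capacity in \Cref{prop:concentratedPart} is the \(\DL{1}\) capacity, and it suffices to exhibit an ideal state \(u_d \in L^1(\Omega)\) with \(\Delta u_d \in \cM(\Omega)\) which solves \eqref{eq:nonlinearDirichletProblem} for some good measure \(\mu_0\) whose concentrated part with respect to the \(\DL{1}\) capacity is nonzero. For such a \(u_d\) one has \(F(\mu_0) = \alpha \norm{\mu_0}_{\cM(\Omega)} < \infty\), so \(F \not\equiv \infty\); and since \((-\Delta u_d)_\c = (\mu_0)_\c \ne 0\), \Cref{prop:concentratedPart} forces every \(\mu\) with \(F(\mu) < \infty\) to satisfy \(\mu_\c = (\mu_0)_\c \ne 0\). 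In particular no such \(\mu\) is diffuse, hence \(F \equiv \infty\) on \(L^1(\Omega)\), and the Lavrentiev phenomenon follows.

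First I would fix a compact set \(K \subset \Omega\) with \(0 < \cH^{N-2}(K) < \infty\) (a Cantor-type set, as in \Cref{prop:nonexistenceSummableControl}), so that \(\capt_{\DL{1}}(K; \Omega) = 0\) by the capacity--Hausdorff relations; any nonzero nonnegative measure supported on \(K\) is then concentrated with respect to the \(\DL{1}\) capacity. The whole difficulty lies in producing a good measure of this type. Here I would invoke the method of sub and supersolutions: taking \(0\) as subsolution, it suffices to find a nonnegative supersolution of \eqref{eq:nonlinearDirichletProblem} with datum \(\mu_0\), that is, a nonnegative \(w\) with \(g(w) \in L^1(\Omega)\) and \(-\Delta w + g(w) \ge \mu_0\). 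The natural candidate is the Newtonian potential \(w\) of \(\mu_0\), which is nonnegative and solves \(-\Delta w = \mu_0\); since \(g(w) \ge 0\) it is automatically a supersolution, so the only thing to verify is the integrability \(g(w) \in L^1(\Omega)\).

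The key step, and the main obstacle, is therefore to arrange \(g(w) \in L^1(\Omega)\) for the potential \(w\) of a suitable concentrated \(\mu_0\), with \(g\) now completely arbitrary. As \(w\) blows up on \(K\), this genuinely couples the geometry of \(K\) and the distribution of \(\mu_0\) to the growth of \(g\). I would control it through the distribution function: writing \(\lambda(t) = \meas{\set{w > t}}\) and using \(w \ge 0\) together with \(g(0) = 0\), one has
\[
\int_\Omega {} g(w) = \int_0^\infty {} \lambda(t) \, \d g(t),
\]
so it is enough to make \(\lambda\) decay fast enough that this Stieltjes integral converges for the given \(g\). Spreading \(\mu_0\) over \(K\) with small, suitably chosen density makes \(w\) grow slowly near \(K\) and forces a fast decay of \(\lambda\); the size of \(K\) and the total mass of \(\mu_0\) would then be tuned to the growth of \(g\). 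I expect this matching to be delicate precisely because the concentrated part of \(-\Delta w\) is nonzero only if \(w\) blows up at least logarithmically along \(K\), which limits how fast \(\lambda\) can be made to decay and thus how fast a \(g\) can be accommodated.

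A cleaner route is available whenever \(g\) is bounded on one side, and this is the case I would present in full. If \(g(+\infty) < \infty\) one may take \(w\) equal to the Newtonian potential of \(\mu_0\) directly, since then \(g(w) \le g(+\infty) \in L^1(\Omega)\) for free, so \emph{every} nonnegative concentrated \(\mu_0\) is good; the case \(g(-\infty) > -\infty\) is symmetric through the reflection \(\tilde g(t) = -g(-t)\) and a concentrated negative part, as in \Cref{rem:truncationSubsolution}. Once \(\mu_0\) is shown to be good, I would set \(u_d\) equal to the corresponding solution and close the argument by the reduction of the first paragraph. The construction of the good concentrated measure for a \(g\) unbounded on both sides is the point I expect to require the most care, to be handled by the distribution-function estimate above with \(K\) and \(\mu_0\) built explicitly from \(g\).
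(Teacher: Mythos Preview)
Your reduction via \Cref{prop:concentratedPart} is exactly the paper's argument: once a good measure \(\mu_0\) with nonzero \(\DL{1}\)-concentrated part is in hand, setting \(u_d\) equal to the associated solution gives \(F(\mu_0)<\infty\) and forces \(\mu_\c=(\mu_0)_\c\neq 0\) for every \(\mu\) with \(F(\mu)<\infty\), so \(F\equiv\infty\) on \(L^1(\Omega)\).

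The difference is in how the good concentrated measure is obtained. The paper does not construct it: it simply invokes \cite{Ponce:2005}*{Theorem~1}, which asserts that for \emph{every} nondecreasing continuous \(g\) with \(g(0)=0\) there exists a nonnegative good measure that is not diffuse with respect to the \(\DL{1}\) capacity. Your proposal instead tries to rebuild that theorem from scratch via the sub/supersolution method and the Newtonian potential. The one-sided bounded case you write out is correct and easy. But the general case---\(g\) unbounded on both sides---is left as a sketch: you describe tuning \(K\) and \(\mu_0\) to \(g\) through the distribution function of the potential, yet you do not carry out the construction. That construction is precisely the nontrivial content of the cited theorem (a Cantor set with generation ratios chosen so that the potential of the natural measure grows slowly enough relative to \(g\)), and your outline, while pointing in the right direction, is not a proof. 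So as written there is a genuine gap in the unbounded case; either complete the Cantor construction explicitly, or do as the paper does and cite the existence result.
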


\begin{proof}
Let \(\mu \in \cM(\Omega)\) be a nonnegative good measure which is not diffuse with respect to the \(\DL{1}\) capacity; see~\cite{Ponce:2005}*{Theorem~1}. Let \(u_d\) be the unique solution of \eqref{eq:nonlinearDirichletProblem} corresponding to \(\mu\) and denote by \(F\) the cost functional with desired state \(u_d\). It follows from \Cref{prop:concentratedPart} that, for every \(\nu \in \cM(\Omega)\) such that \(F(\nu) < \infty\), we have
\[
\nu_\c = (- \Delta u_d)_\c = \mu_\c,
\]
where the subscript \(\c\) denotes the concentrated part of the measure with respect to the \(\DL{1}\) capacity. Since \(\mu\) is not diffuse, we have \(\nu_\c \not\equiv 0\). In particular, \(\nu \not\in L^1(\Omega)\). Hence, \(F \equiv \infty\) in \(L^1(\Omega)\). The proof is complete.
\end{proof}


\section{Lack of convexity and lower semicontinuity}
\label{sec:example}

In this section, we show that the cost functional \(F\) need not be convex or lower semicontinuous with respect to the total variation norm. For this purpose, we consider polynomial nonlinearities.

\begin{proposition}
\label{prop:nonconvexityF}
Assume that \(1 \le p < \infty\) and \(g(t) = \abs{t}^{p - 1} t\). Let \(u_d\) be the unique solution of \eqref{eq:nonlinearDirichletProblem} corresponding to some positive measure \(\mu \in \cM(\Omega)\) and let \(F\) be the cost functional with desired state \(u_d\). Then \(F\) is not convex.
\end{proposition}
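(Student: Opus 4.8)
The plan is to exhibit a single line segment in \(\cM(\Omega)\) along which \(F\) fails to be convex, namely the segment \(\set{s\mu : 1 \le s \le 3}\). Since \(\mu\) is a positive good measure, the absorption estimate \eqref{eq:absorptionEstimate} gives \(\norm{u_{s\mu}}_{L^p(\Omega)}^p = \norm{g(u_{s\mu})}_{L^1(\Omega)} \le s\norm{\mu}_{\cM(\Omega)}\), so every datum \(s\mu\) with \(s \ge 0\) is again good and its solution \(u_{s\mu}\) lies in \(L^p(\Omega)\); here \(u_{s\mu}\) denotes the solution of \eqref{eq:nonlinearDirichletProblem} with datum \(s\mu\), so that \(u_{1 \cdot \mu} = u_d\). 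Along this ray one has \(F(s\mu) = \psi(s) + \alpha s\norm{\mu}_{\cM(\Omega)}\) with \(\psi(s) \defeq \norm{u_{s\mu} - u_d}_{L^p(\Omega)}\) and \(\psi(1) = 0\). As the term \(\alpha s\norm{\mu}_{\cM(\Omega)}\) is affine in \(s\), it suffices to show that \(\psi\) violates midpoint convexity between \(s = 1\) and \(s = 3\); concretely I will prove \(\psi(2) > \tfrac{1}{2}\psi(3)\).

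The engine is the pointwise concavity of the map \(s \mapsto u_{s\mu}\), which stems from the convexity of \(g(t) = \abs{t}^{p-1}t\) on \([0, \infty)\) (all the solutions are nonnegative since \(\mu \ge 0\) and \(g(0) = 0\)). I would set \(v \defeq \tfrac{1}{2}(u_\mu + u_{3\mu})\) and use \(-\Delta u_{s\mu} = s\mu - g(u_{s\mu})\) to compute
\[
-\Delta v + g(v) = 2\mu + \Bigl( g(v) - \tfrac{1}{2}\bigl( g(u_\mu) + g(u_{3\mu}) \bigr) \Bigr) \le 2\mu,
\]
the inequality being Jensen's inequality for the convex function \(g\). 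Thus \(v\) is a subsolution of \eqref{eq:nonlinearDirichletProblem} with datum \(2\mu\), and the comparison principle yields \(v \le u_{2\mu}\) almost everywhere in \(\Omega\).

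To upgrade this to a strict inequality I would invoke the strict convexity of \(g\) on \([0, \infty)\) (available for \(p > 1\)): the defect \(g(v) - \tfrac{1}{2}(g(u_\mu) + g(u_{3\mu}))\) is then strictly negative at every point where \(u_\mu \ne u_{3\mu}\), and these points form a set of positive measure—otherwise \(u_\mu = u_{3\mu}\) almost everywhere would force \(\mu = -\Delta u_\mu + g(u_\mu) = -\Delta u_{3\mu} + g(u_{3\mu}) = 3\mu\), contradicting \(\mu \ne 0\). Consequently \(v \le u_{2\mu}\) must be strict on a set of positive measure: were \(v = u_{2\mu}\) almost everywhere, then \(-\Delta v + g(v) = 2\mu\) would force the defect to vanish almost everywhere, a contradiction. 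Writing \(H(s) \defeq u_{s\mu} - u_d\), which is nonnegative for \(s \ge 1\) by monotonicity of the solution in the datum, and using \(u_\mu = u_d\), this reads \(H(2) \ge \tfrac{1}{2} H(3)\) everywhere with strict inequality on a set of positive measure. Since both sides are nonnegative and lie in \(L^p(\Omega)\), monotonicity of the \(L^p\) norm gives \(\psi(2) = \norm{H(2)}_{L^p(\Omega)} > \tfrac{1}{2}\norm{H(3)}_{L^p(\Omega)} = \tfrac{1}{2}\psi(3)\). Recalling \(\psi(1) = 0\) and adding back the affine term, this is exactly \(F(2\mu) > \tfrac{1}{2}F(\mu) + \tfrac{1}{2}F(3\mu)\), so \(F\) is not convex.

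The main obstacle I anticipate is precisely the strictness step: one must ensure that the subsolution inequality \(v \le u_{2\mu}\) is strict on a positive-measure set rather than an almost-everywhere equality, and this is where the strict convexity of \(g\) for \(p > 1\), combined with the injectivity of the datum-to-solution correspondence, is essential. The remaining ingredients—goodness of \(s\mu\), membership of \(u_{s\mu}\) in \(L^p(\Omega)\), and the comparison principle—are standard and follow from the absorption estimate \eqref{eq:absorptionEstimate} together with the results already quoted in the paper.
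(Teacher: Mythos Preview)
Your proposal is correct and follows essentially the same route as the paper: both arguments work on the ray \(\set{s\mu : s \ge 1}\), use the convexity of \(g\) on \([0,\infty)\) to show that \(\tfrac12(u_\mu + u_{\theta\mu})\) is a subsolution for the datum \(\tfrac{1+\theta}{2}\mu\), upgrade to a strict pointwise inequality via the strict convexity of \(g\), and conclude by monotonicity of the \(L^p\) norm (the paper keeps \(\theta>1\) generic where you fix \(\theta=3\)). One minor remark: your sentence deriving goodness of \(s\mu\) from the absorption estimate is circular, since \eqref{eq:absorptionEstimate} presupposes existence; the clean justification is the cone property of nonnegative good measures cited in the paper (or the Baras--Pierre characterization), after which the absorption estimate indeed gives \(u_{s\mu}\in L^p(\Omega)\).
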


By a positive measure, we mean that \(\mu \ge 0\) and \(\mu \not\equiv 0\). The proof of \Cref{prop:nonconvexityF} relies on the fact that the set of nonnegative good measures for \(g\) as above is a convex cone; see~\cite{BrezisMarcusPonce:2007}*{Proposition~4.3}.

\begin{proof}
Let \(\theta > 1\). To prove that \(F\) is not convex, if suffices to show that
\begin{equation}
\label{eq:nonconvexityF}
\frac{F(\mu) + F(\theta \mu)}{2} < F \paren[\bigg]{\frac{1 + \theta}{2} \mu}.
\end{equation}
On the one hand,
\begin{equation}
\label{eq:normLinearity}
\norm[\bigg]{\frac{1 + \theta}{2} \mu}_{\cM(\Omega)} = \frac{\norm{\mu}_{\cM(\Omega)} + \norm{\theta \mu}_{\cM(\Omega)}}{2}.
\end{equation}
On the other hand---denoting by \(v\) and \(w\) the solutions of \eqref{eq:nonlinearDirichletProblem} with data \(\frac{1 + \theta}{2} \mu\) and \(\theta \mu\), respectively---the convexity of \(g\) on \(\left[0, \infty\right[\) implies that \(\frac{u_d + w}{2}\) is a subsolution of \eqref{eq:nonlinearDirichletProblem} with datum \(\frac{1 + \theta}{2} \mu\). It then follows from the weak maximum principle that
\[
0 \le \frac{u_d + w}{2} \le v \quad \text{almost everywhere in \(\Omega\).}
\]
By strict convexity of \(g\), equality cannot hold almost everywhere. Since
\[
0 \le \frac{w - u_d}{2} \le v - u_d \quad \text{almost everywhere in \(\Omega\),}
\]
and equality fails on a set of positive measure, we have
\begin{equation}
\label{eq:normStrictInequality}
\frac{\norm{w - u_d}_{L^p(\Omega)}}{2} < \norm{v - u_d}_{L^p(\Omega)}.
\end{equation}
Combining \eqref{eq:normLinearity} and \eqref{eq:normStrictInequality} we obtain \eqref{eq:nonconvexityF}, from which the conclusion follows.
\end{proof}

We now develop the example given in the introduction. Namely, we prove

\begin{proposition}
\label{prop:notLowerSemicontinuous}
Assume that \(u_d \in L^p(\Omega)\) and \(g(t) = \abs{t}^{q - 1} t\) for some \(q \ge \frac{N}{N- 2}\). If~\(1 \le p \le q\), then \(F\) is not lower semicontinuous with respect to weak* convergence in \(\cM(\Omega)\).
\end{proposition}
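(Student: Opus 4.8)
The plan is to disprove lower semicontinuity directly, by exhibiting a sequence \((\mu_n)_{n \in \N}\) in \(\cM(\Omega)\) that converges weakly* to some measure \(\mu\) for which \(F(\mu) > \liminf_{n \to \infty} F(\mu_n)\). Following the heuristic sketched in the introduction, I would fix an interior point \(x_0 \in \Omega\) and take \(\mu_n = \rho_n\), a sequence of nonnegative mollifiers concentrating at \(x_0\), normalized so that \(\int_\Omega \rho_n = 1\) and with \(\supp \rho_n \subset \Omega\) for all \(n\) large. Such a sequence converges weakly* to the Dirac mass \(\delta_{x_0}\) in \(\cM(\Omega)\), and each \(\rho_n \in C_c^\infty(\Omega) \subset L^1(\Omega)\) is a good measure. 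The whole argument then reduces to two claims: that \(F(\delta_{x_0}) = \infty\), and that the values \(F(\mu_n)\) remain uniformly bounded.

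For the first claim, the key input is the nonexistence result of B\'enilan and Brezis~\cite{BenilanBrezis:2003}: since \(g(t) = \abs{t}^{q - 1} t\) with \(q \ge \frac{N}{N - 2}\), the Dirac mass \(\delta_{x_0}\) is \emph{not} a good measure, i.e.\ \eqref{eq:nonlinearDirichletProblem} with datum \(\delta_{x_0}\) has no solution. By the very definition of the cost functional, this forces \(F(\delta_{x_0}) = \infty\). This supercritical nonexistence is the heart of the matter and the only genuinely nontrivial ingredient; everything else is a routine estimate.

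For the second claim, I would bound \(F(\mu_n) = \norm{u_n - u_d}_{L^p(\Omega)} + \alpha \norm{\mu_n}_{\cM(\Omega)}\) term by term, where \(u_n\) denotes the solution of \eqref{eq:nonlinearDirichletProblem} with datum \(\rho_n\). The control norm is exactly \(\norm{\rho_n}_{L^1(\Omega)} = 1\). For the first term, the absorption estimate \eqref{eq:absorptionEstimate} applied to \(g(t) = \abs{t}^{q - 1} t\) reads \(\norm{u_n}_{L^q(\Omega)}^q = \norm{g(u_n)}_{L^1(\Omega)} \le \norm{\rho_n}_{\cM(\Omega)} = 1\), hence \(\norm{u_n}_{L^q(\Omega)} \le 1\). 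Since \(p \le q\) and \(\Omega\) is bounded, H\"older's inequality gives \(\norm{u_n}_{L^p(\Omega)} \le \meas{\Omega}^{1/p - 1/q}\), so that \(\norm{u_n - u_d}_{L^p(\Omega)} \le \meas{\Omega}^{1/p - 1/q} + \norm{u_d}_{L^p(\Omega)}\), a bound independent of \(n\) (here the hypothesis \(u_d \in L^p(\Omega)\) is used). Consequently \(\limsup_{n \to \infty} F(\mu_n) < \infty\).

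Combining the two claims yields the conclusion: \((\mu_n)_{n \in \N}\) converges weakly* to \(\delta_{x_0}\) in \(\cM(\Omega)\), yet \(F(\delta_{x_0}) = \infty > \liminf_{n \to \infty} F(\mu_n)\), which is incompatible with lower semicontinuity with respect to weak* convergence. I expect the main difficulty to be conceptual rather than computational, namely identifying and correctly invoking the supercritical threshold \(q \ge \frac{N}{N - 2}\) for the failure of solvability with a point mass; once that is in place, the uniform bound on \(F(\mu_n)\) follows immediately from the absorption estimate together with the assumption \(p \le q\).
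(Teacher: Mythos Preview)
Your proposal is correct and follows essentially the same approach as the paper's own proof: mollifiers concentrating at an interior point, nonsolvability at the Dirac mass via B\'enilan--Brezis in the supercritical regime, and the uniform bound on \(F(\rho_n)\) via the absorption estimate combined with H\"older's inequality under the hypothesis \(p \le q\). The paper additionally records that \((u_n)\) converges to \(0\) in \(L^1(\Omega)\), but this is not needed for the \(\limsup\) bound and your omission of it is harmless.
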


\begin{proof}
Let \(a \in \Omega\), let \((\rho_n)_{n \in \N_*}\) be a sequence of translated mollifiers in \(C_c^\infty(\R^N)\) such that \(\supp \rho_n \subset B(a; 1/n)\) and let \(u_n\) be the unique solution of the Dirichlet problem
\[
\left\{
\begin{alignedat}{3}
- \Delta u_n + \abs{u_n}^{q - 1} u_n &= \rho_n &&\quad \text{in \(\Omega\),} \\
u_n &= 0 &&\quad \text{on \(\partial\Omega\).}
\end{alignedat}
\right.
\]
One shows that the sequence \((\rho_n)_{n \in \N_*}\) converges weakly* to the Dirac mass \(\delta_a\) in \(\cM(\Omega)\). However, in dimension \(N \ge 3\), the Dirichlet problem above with datum \(\delta_a\) has no solution if \(q \ge \frac{N}{N - 2}\); see~\cite{BenilanBrezis:2003}*{Remark~A.4}. Hence \(F(\delta_a) = \infty\). Our goal is to show that
\begin{equation}
\label{eq:limsupMollifiers}
\limsup_{n \to \infty} {} F(\rho_n) < \infty.
\end{equation}
On the one hand, it follows from \cite{Brezis:1983}*{Theorem~4} that the sequence \((u_n)_{n \in \N_*}\) converges strongly to \(0\) in \(L^1(\Omega)\). On the other hand, the absorption estimate \eqref{eq:absorptionEstimate} implies that
\[
\norm{u_n}_{L^q(\Omega)}^q = \norm{g(u_n)}_{L^1(\Omega)} \le \norm{\rho_n}_{L^1(\Omega)} \le 1.
\]
Since \(p \le q\) and \(\Omega\) is bounded, we deduce from H\"older's inequality that
\[
F(\rho_n) \le \norm{u_n}_{L^p(\Omega)} + \norm{u_d}_{L^p(\Omega)} + \alpha \norm{\rho_n}_{\cM(\Omega)} \le \abs{\Omega}^{\frac{1}{p} - \frac{1}{q}} + \norm{u_d}_{L^p(\Omega)} + \alpha.
\]
This yields \eqref{eq:limsupMollifiers} and the conclusion follows.
\end{proof}

Using the classical interpolation inequality in Lebesgue spaces, one can show that for \(p < q\),
\[
\lim_{n \to \infty} F(\rho_n) = \norm{u_d}_{L^p(\Omega)} + \alpha.
\]
The case \(p = q\) can be handled with the Brezis--Lieb lemma~\citelist{\cite{BrezisLieb:1983} \cite{Willem:2013}*{Theorem~4.2.7}}, which gives
\[
\lim_{n \to \infty} F(\rho_n) = \paren[\big]{\norm{u_d}_{L^p(\Omega)} + 1}^{1/p} + \alpha.
\]
In the case where \(p > q \ge \frac{N}{N - 2}\), the sequence \((u_n)_{n \in \N_*}\) cannot be bounded in \(L^p(\Omega)\). Indeed, if that were the case, then it would follow from the classical interpolation inequality in Lebesgue spaces that the sequence \((g(u_n))_{n \in \N_*}\) converges strongly to \(0\) in \(L^1(\Omega)\), contradicting the fact that such a sequence converges weakly* to \(\delta_a\) in \(\cM(\Omega)\).

\section{Remark on the Lavrentiev phenomenon}
\label{sec:lavrentiev}

In contrast with \Cref{prop:nonexistenceSummableControl} above, we show that the Lavrentiev phenomenon \emph{cannot} occur if \(g(t) = \abs{t}^{p - 1} t\) and \(u_d \in L^p(\Omega)\) for some \(1 \le p < \infty\). This is the content of

\begin{proposition}
\label{prop:noLavrentievPhenomenon}
Assume that \(1 \le p < \infty\) and \(g(t) = \abs{t}^{p - 1} t\). If \(u_d \in L^p(\Omega)\), then
\[
\inf_{\mu \in \cM(\Omega)} {} F(\mu) = \inf_{\mu \in L^1(\Omega)} {} F(\mu).
\]
\end{proposition}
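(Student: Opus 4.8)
The two sides of the claimed identity are not symmetric: since $L^1(\Omega) \subset \cM(\Omega)$, one trivially has $\inf_{\mu \in \cM(\Omega)} F(\mu) \le \inf_{\mu \in L^1(\Omega)} F(\mu)$, so the plan is to establish the reverse inequality. It suffices to show that for \emph{every} good measure $\mu \in \cM(\Omega)$ with $F(\mu) < \infty$ there is a sequence $(\mu_n)_{n \in \N}$ in $L^1(\Omega)$ with $\limsup_{n \to \infty} {} F(\mu_n) \le F(\mu)$; taking the infimum over such $\mu$ then yields $\inf_{\mu \in L^1(\Omega)} F(\mu) \le \inf_{\mu \in \cM(\Omega)} F(\mu)$. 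Fix such a $\mu$ and let $u$ be its solution. Since $u_d \in L^p(\Omega)$ and $u - u_d \in L^p(\Omega)$, we have $u \in L^p(\Omega)$, and because $g(t) = \abs{t}^{p-1}t$ this gives $g(u) \in L^1(\Omega)$ with $\norm{g(u)}_{L^1(\Omega)} = \norm{u}_{L^p(\Omega)}^p$. Moreover, by \Cref{prop:concentratedPart} the measure $\mu$ is diffuse with respect to the $\DL{p'}$ capacity.

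For the approximating data I would use mollification. After first replacing $\mu$ by its restriction to $\set{x \in \Omega : \operatorname{dist}(x, \partial\Omega) > \varepsilon}$---which converges to $\mu$ in the total variation norm by inner regularity and remains diffuse and compactly supported---I set $\mu_n \defeq \rho_n * \mu$, which for $n$ large lies in $C_c^\infty(\Omega) \subset L^1(\Omega)$, converges weakly* to $\mu$, and satisfies $\norm{\mu_n}_{L^1(\Omega)} \le \norm{\mu}_{\cM(\Omega)}$. Let $u_n$ solve \eqref{eq:nonlinearDirichletProblem} with datum $\mu_n$. By \Cref{prop:existenceRL} the sequence $(u_n)_{n \in \N}$ converges in $L^1(\Omega)$ to the solution of the reduced limit $\mu^\#$. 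The crucial point is that, for the power nonlinearity, a diffuse measure is good and stable under this approximation, so that $\mu^\# = \mu$ and hence $u_n \to u$ in $L^1(\Omega)$ together with $g(u_n) \to g(u)$ in $L^1(\Omega)$ (the latter following from the almost everywhere convergence and the equi-integrability guaranteed by diffuseness through the absorption estimate \eqref{eq:absorptionEstimate}).

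It then remains to upgrade the $L^1$ convergence of $(u_n)_{n \in \N}$ to $L^p$ convergence. Passing to a subsequence with $u_n \to u$ almost everywhere in $\Omega$, I would combine the identity $\norm{u_n}_{L^p(\Omega)}^p = \norm{g(u_n)}_{L^1(\Omega)} \to \norm{g(u)}_{L^1(\Omega)} = \norm{u}_{L^p(\Omega)}^p$ with the Brezis--Lieb lemma to conclude that $u_n \to u$ in $L^p(\Omega)$. Consequently $\limsup_{n \to \infty} {} F(\mu_n) \le \norm{u - u_d}_{L^p(\Omega)} + \alpha \norm{\mu}_{\cM(\Omega)} = F(\mu)$, which is exactly what is needed. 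The main obstacle is the equality $\mu^\# = \mu$: it is precisely the place where the hypothesis $g(t) = \abs{t}^{p-1}t$ enters, since for a general nondecreasing $g$ a diffuse measure need not be recovered as the reduced limit of its mollifications, and it relies on the capacitary characterization of good measures for power nonlinearities. A two-step diagonal argument is also required to absorb the preliminary inner restriction used to control the boundary behaviour of the mollification.
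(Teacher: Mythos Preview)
Your plan matches the paper's at the top level: fix $\mu$ with $F(\mu) < \infty$, note that $u \in L^p(\Omega)$ and hence $\mu$ is diffuse, and approximate $\mu$ by $L^1$ data. (A minor point: \Cref{prop:concentratedPart} requires $\Delta u_d \in \cM(\Omega)$, which is not assumed here; the diffuseness of $\mu$ follows instead from Baras--Pierre, since for $g(t)=\abs{t}^{p-1}t$ good measures are exactly the diffuse ones.) Your inner-restriction step is also fine: $\mu_\varepsilon \to \mu$ strongly in $\cM(\Omega)$, so by \cite{BrezisMarcusPonce:2007}*{Proposition~4.2} one gets $g(u_\varepsilon) \to g(u)$ in $L^1$ and then $u_\varepsilon \to u$ in $L^p$ via Brezis--Lieb, hence $F(\mu_\varepsilon)\to F(\mu)$.

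The genuine gap is in the mollification step. You assert that $(g(u_n))_n$ is equi-integrable, ``guaranteed by diffuseness through the absorption estimate,'' but \eqref{eq:absorptionEstimate} only yields the uniform bound $\norm{g(u_n)}_{L^1} \le \norm{\rho_n*\mu}_{L^1} \le \norm{\mu}_{\cM(\Omega)}$, not equi-integrability. Diffuseness of the \emph{limit} measure $\mu$ says nothing about how the sequence $g(u_n)$ distributes its mass; even after granting $\mu^\# = \mu$ and $u_n \to u$ a.e., one only obtains $g(u_n) \to g(u)$ in the sense of distributions, which still permits mass of $g(u_n)$ to drift toward $\partial\Omega$. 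Without $\norm{u_n}_{L^p} \to \norm{u}_{L^p}$ the Brezis--Lieb lemma is inconclusive. Note also that equi-integrability of $g(u_n)$ would itself \emph{imply} $\mu^\# = \mu$, so the two obstacles you separate are really the same one, and it is not the one you flag.

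The paper closes this gap by pointwise domination rather than equi-integrability. It does not mollify an arbitrary compactly supported diffuse measure: it first approximates $\mu$ strongly in $\cM(\Omega)$ by compactly supported measures $\nu$ whose total variations satisfy the trace bound $\abs{\int \zeta\, d\abs{\nu}} \le C\norm{\Delta\zeta}_{L^{p'}}$ (via \cite{PonceWilmet:2017}*{Proposition~2.1}). For such $\nu$ the Newtonian potential $\NP\abs{\nu}$ lies in $L^p(\Omega)$, and a comparison argument gives $\abs{u_n} \le \rho_n * \NP\abs{\nu}$; dominated convergence then yields $u_n \to u$ in $L^p$ directly. Your inner restriction does not produce the trace bound, so this domination is unavailable in your scheme; the extra approximation layer is precisely what your argument is missing.
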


In dimension \(N \ge 3\), we rely on a strong approximation property of diffuse measures based on the Hahn--Banach theorem~\cite{PonceWilmet:2017}*{Proposition~2.1}. The proof in dimension \(N = 2\) is simpler due to the fact that solutions of \eqref{eq:nonlinearDirichletProblem} are continuously embedded in \(L^q(\Omega)\) for every \(1 \le q < \infty\).

\resetconstant

\begin{proof}
We assume that \(N \ge 3\). The case \(N = 2\) will be explained afterward. Let \(\mu \in \cM(\Omega)\) be such that \(F(\mu) < \infty\) and let \(u\) be the unique solution of \eqref{eq:nonlinearDirichletProblem} corresponding to \(\mu\). We shall prove the existence of a sequence \((\mu_n)_{n \in \N}\) in \(L^1(\Omega)\) satisfying
\begin{equation}
\label{eq:approximationL1}
\lim_{n \to \infty} {} F(\mu_n) = F(\mu).
\end{equation}
For this purpose, we first assume that \(\mu\) is a nonnegative measure compactly supported in \(\Omega\) such that
\begin{equation}
\label{eq:trace}
\abs[\bigg]{\int_\Omega {} \zeta \d\mu} \le C \norm{\Delta \zeta}_{L^{p'}(\Omega)},
\end{equation}
for every \(\zeta \in C_0^\infty(\overline{\Omega})\). In this case, we show that the sequence \((\rho_n * \mu)_{n \in \N_*}\) satisfies \eqref{eq:approximationL1}. Let us denote by \(u_n\) the unique solution of \eqref{eq:nonlinearDirichletProblem} with datum \(\mu_n\). Taking a subsequence if necessary, we may assume that \((u_n)_{n \in \N*}\) converges strongly to some function \(v\) in \(L^1(\Omega)\). Since
\begin{equation}
\label{eq:approximationL1norm}
\lim_{n \to \infty} {} \norm{\rho_n * \mu}_{L^1(\Omega)} = \norm{\mu}_{\cM(\Omega)},
\end{equation}
we are left to prove that \((u_n)_{n \in \N_*}\) converges strongly to \(u\) in \(L^p(\Omega)\). To this end, we extend \(\mu\) by zero in \(\R^N \setminus \Omega\) and we let \(\NP\mu : \R^N \to \bracks{0, \infty}\) denote the Newtonian potential generated by \(\mu\): for every \(x \in \R^N\),
\[
\NP\mu(x) = \frac{1}{(N - 2) \sigma_N} \int_\Omega {} \frac{\d\mu(y)}{\abs{x - y}^{N - 2}},
\]
where \(\sigma_N\) is the surface measure of the unit sphere in \(\R^N\). One shows that \(\NP\mu\) belongs to \(L^1(\Omega)\) and satisfies the Poisson equation
\[
- \Delta \NP\mu = \mu \quad \text{in the sense of distributions in \(\R^N\);}
\]
see e.g.~\cite{Ponce:2016}*{Example~2.12}. We first claim that the convolution \(\rho_n * \NP\mu\) is a supersolution of \eqref{eq:nonlinearDirichletProblem} with datum \(\rho_n * \mu\). On the one hand, Fubini's theorem implies that
\[
- \Delta (\rho_n * \NP\mu) = \rho_n * \mu \quad \text{in \(\R^N\).}
\]
Since \(\rho_n * \NP\mu \ge 0\) on \(\partial \Omega\), by the Divergence theorem we have
\[
- \int_\Omega {} (\rho_n * v) \Delta \zeta \ge \int_\Omega {} \zeta (\rho_n * \mu),
\]
for every nonnegative function \(\zeta \in C_0^\infty(\overline{\Omega})\). Hence
\[
- \int_\Omega {} (\rho_n * \NP\mu) \Delta \zeta + \int_\Omega {} g(\rho_n * \NP\mu) \zeta \ge \int_\Omega {} \zeta (\rho_n * \mu).
\]
By the weak maximum principle, we thus have
\[
0 \le u_n \le \rho_n * \NP\mu \quad \text{almost everywhere in \(\Omega\).}
\]
We now claim that \(\NP\mu \in L^p(\Omega)\). For this purpose, let \(w\) be the unique solution of the Dirichlet problem
\[
\left\{
\begin{alignedat}{3}
- \Delta w &= \mu &&\quad \text{in \(\Omega\),} \\
w &= 0 &&\quad \text{on \(\partial\Omega\).}
\end{alignedat}
\right.
\]
Using the Riesz representation theorem, one deduces from estimate \eqref{eq:trace} that \(w \in L^p(\Omega)\). Since
\[
- \Delta (\NP\mu - w) = 0 \quad \text{in the sense of distributions in \(\Omega\),}
\]
the function \(\NP\mu - w\) is harmonic in \(\Omega\), which implies that \(\NP\mu \in L_\loc^p(\Omega)\). Since \(\NP\mu\) is harmonic in \(\R^N \setminus \supp \mu\), we thus have \(\NP\mu \in L^p(\Omega)\). Taking a further subsequence if needed, we may assume that the sequence \((u_n)_{n \in \N_*}\) converges almost everywhere to \(v\) in \(\Omega\). We then deduce from the Dominated convergence theorem that \((u_n)_{n \in \N_*}\) converges strongly to \(v\) in \(L^p(\Omega)\). Since \((g(u_n))_{n \in \N_*}\) converges strongly to \(g(v)\) is \(L^1(\Omega)\), it follows by uniqueness that \(v = u\).

We now assume that \(\mu\) is a signed measure such that \(\abs{\mu}\) satisfies \eqref{eq:trace}. Applying the previous case to the positive and negative parts of \(\mu\), one shows that \(\NP\abs{\mu} \in L^p(\Omega)\) and
\[
0 \le \abs{u_n} \le \rho_n * \NP\abs{\mu} \quad \text{almost everywhere in \(\Omega\).}
\]
Then, passing to a subsequence if necessary, \eqref{eq:approximationL1} follows from \eqref{eq:approximationL1norm} and the Dominated convergence theorem.

In the case of an arbritrary measure \(\mu \in \cM(\Omega)\) such that \(F(\mu) < \infty\), we rely on a characterization of good measures for polynomial nonlinearities due to Baras and Pierre~\cite{BarasPierre:1984}. More precisely, we have that good measures must be diffuse with respect to the \(\DL{p'}\) capacity; see~\cite{BarasPierre:1984}*{Th\'eor\`eme~4.1}. In particular, \(\mu\) is diffuse and there exists a sequence \((\mu_n)_{n \in \N}\) of compactly supported measures in \(\cM(\Omega)\) such that \(\abs{\mu_n}\) satisfies \eqref{eq:trace} and
\[
\lim_{n \to \infty} {} \norm{\mu_n - \mu}_{\cM(\Omega)} = 0;
\]
see~\cite{PonceWilmet:2017}*{Proposition~2.1}. Let \(u_n\) denote the unique solution of \eqref{eq:nonlinearDirichletProblem} with datum \(\mu_n\). One shows that the sequences \((u_n)_{n \in \N}\) and \((g(u_n))_{n \in \N}\) converge strongly in \(L^1(\Omega)\) to the functions \(u\) and \(g(u)\), respectively; see~\cite{BrezisMarcusPonce:2007}*{Proposition~4.2}. Thus, \((u_n)_{n \in \N}\) converges strongly to \(u\) in \(L^p(\Omega)\), which implies that
\[
\lim_{n \to \infty} {} F(\mu_n) = F(\mu).
\]
For every \(j \in \N_*\) and every \(k \in \N\), we have
\[
\abs{F(\rho_j * \mu_k) - F(\mu)} \le \abs{F(\rho_j * \mu_k) - F(\mu_k)} + \abs{F(u_k) - F(\mu)}.
\]
Let \((n_k)_{k \in \N}\) be an increasing sequence of indices such that, for every \(k \in \N_*\),
\[
\abs{F(\rho_{n_k} * \mu_k) - F(\mu_k)} \le 1/k.
\]
Then the sequence \((\rho_{n_k} * \mu_k)_{k \in \N_*}\) is contained in \(L^1(\Omega)\) and satisfies \eqref{eq:approximationL1}. The proof is complete when \(N \ge 3\).

In the case where \(N = 2\), the solution \(u\) of \eqref{eq:nonlinearDirichletProblem} corresponding to \(\mu \in \cM(\Omega)\) belongs to \(L^q(\Omega)\) for every \(1 \le q < \infty\) and satisfies the estimate
\[
\norm{v}_{L^q(\Omega)} \le C \norm{\nu}_{\cM(\Omega)},
\]
for some constant \(C > 0\) depending on \(q\) and \(\Omega\). Hence, letting \(u_n\) denote the unique solution of \eqref{eq:nonlinearDirichletProblem} with datum \(\rho_n * \mu\), taking a subsequence if necessary, we directly deduced from the interpolation inequality in Lebesgue spaces and from the uniqueness of \(u\) that \((u_n)_{n \in \N_*}\) converges strongly to \(u\) in \(L^p(\Omega)\). From this, \eqref{eq:approximationL1} follows and the proof of the proposition when \(N = 2\) is complete.
\end{proof}

\begin{bibdiv}
\begin{biblist}

\bib{AdamsHedberg:1996}{book}{
   author={Adams, David R.},
   author={Hedberg, Lars I.},
   title={Function spaces and potential theory},
   series={Grundlehren der Mathematischen Wissenschaften},
   volume={314},
   publisher={Springer-Verlag, Berlin},
   date={1996},
}

\bib{BarasPierre:1984}{article}{
   author={Baras, P.},
   author={Pierre, M.},
   title={Singularit\'es \'eliminables pour des \'equations semi-lin\'eaires},
   journal={Ann. Inst. Fourier (Grenoble)},
   volume={34},
   date={1984},
   number={1},
   pages={185--206},
}

\bib{BenilanBrezis:2003}{article}{
   author={B\'enilan, Philippe},
   author={Brezis, Ha\"im},
   title={Nonlinear problems related to the Thomas-Fermi equation},
   journal={J. Evol. Equ.},
   volume={3},
   date={2003},
   number={4},
   pages={673--770},
}

\bib{Brezis:1983}{article}{
   author={Brezis, Ha\"im},
   title={Nonlinear elliptic equations involving measures},
   conference={
      title={Contributions to nonlinear partial differential equations},
      address={Madrid},
      date={1981},
   },
   book={
      series={Res. Notes in Math.},
      volume={89},
      publisher={Pitman, Boston, MA},
   },
   date={1983},
   pages={82--89},
}

\bib{BrezisLieb:1983}{article}{
   author={Brezis, Ha\"im},
   author={Lieb, Elliott H.},
   title={A relation between pointwise convergence of functions and
   convergence of functionals},
   journal={Proc. Amer. Math. Soc.},
   volume={88},
   date={1983},
   number={3},
   pages={486--490},
}

\bib{BrezisMarcusPonce:2007}{article}{
   author={Brezis, Ha\"im},
   author={Marcus, Moshe},
   author={Ponce, Augusto C.},
   title={Nonlinear elliptic equations with measures revisited},
   conference={
      title={Mathematical aspects of nonlinear dispersive equations},
   },
   book={
      series={Ann. of Math. Stud.},
      volume={163},
      publisher={Princeton Univ. Press, Princeton, NJ},
   },
   date={2007},
   pages={55--109},
}

\bib{BrezisPonce:2008}{article}{
   author={Brezis, Ha\"im},
   author={Ponce, Augusto C.},
   title={Kato's inequality up to the boundary},
   journal={Commun. Contemp. Math.},
   volume={10},
   date={2008},
   number={6},
   pages={1217--1241},
}

\bib{BrezisStrauss:1973}{article}{
   author={Brezis, Ha\"im},
   author={Strauss, Walter A.},
   title={Semi-linear second-order elliptic equations in $L^{1}$},
   journal={J. Math. Soc. Japan},
   volume={25},
   date={1973},
   pages={565--590},
}

\bib{Casas:2017}{article}{
   author={Casas, Eduardo},
   title={A review on sparse solutions in optimal control of partial differential equations},
   journal={SeMA Journal},
   date={2017},
}

\bib{CasasClasonKunisch:2012}{article}{
   author={Casas, Eduardo},
   author={Clason, Christian},
   author={Kunisch, Karl},
   title={Approximation of elliptic control problems in measure spaces with
   sparse solutions},
   journal={SIAM J. Control Optim.},
   volume={50},
   date={2012},
   pages={1735--1752},
}

\bib{CasasKunisch:2014}{article}{
   author={Casas, Eduardo},
   author={Kunisch, Karl},
   title={Optimal control of semilinear elliptic equations in measure
   spaces},
   journal={SIAM J. Control Optim.},
   volume={52},
   date={2014},
   number={1},
   pages={339--364},
}

\bib{ClasonKunisch:2011}{article}{
   author={Clason, Christian},
   author={Kunisch, Karl},
   title={A duality-based approach to elliptic control problems in
   non-reflexive Banach spaces},
   journal={ESAIM Control Optim. Calc. Var.},
   volume={17},
   date={2011},
   number={1},
   pages={243--266},
}


\bib{DupaignePonce:2004}{article}{
   author={Dupaigne, Louis},
   author={Ponce, Augusto C.},
   title={Singularities of positive supersolutions in elliptic PDEs},
   journal={Selecta Math. (N.S.)},
   volume={10},
   date={2004},
   number={3},
   pages={341--358},
}

\bib{GilbargTrudinger:2001}{book}{
   author={Gilbarg, David},
   author={Trudinger, Neil S.},
   title={Elliptic partial differential equations of second order},
   series={Classics in Mathematics},
   publisher={Springer-Verlag, Berlin},
   date={2001},
}

\bib{Grun-Rehomme:1977}{article}{
   author={Grun-Rehomme, Michel},
   title={Caract\'erisation du sous-diff\'erentiel d'int\'egrandes convexes dans
   les espaces de Sobolev},
   journal={J. Math. Pures Appl. (9)},
   volume={56},
   date={1977},
   number={2},
   pages={149--156},
}

\bib{LittmanStampacchiaWeinberger:1963}{article}{
   author={Littman, Walter},
   author={Stampacchia, Guido},
   author={Weinberger, Hans F.},
   title={Regular points for elliptic equations with discontinuous
   coefficients},
   journal={Ann. Scuola Norm. Sup. Pisa (3)},
   volume={17},
   date={1963},
   pages={43--77},
}

\bib{MarcusPonce:2010}{article}{
   author={Marcus, Moshe},
   author={Ponce, Augusto C.},
   title={Reduced limits for nonlinear equations with measures},
   journal={J. Funct. Anal.},
   volume={258},
   date={2010},
   number={7},
   pages={2316--2372},
}

\bib{Mokobodzki:1978}{article}{
   author={Mokobodzki, Gabriel},
   title={Domination d'une mesure par une capacit\'e (un analogue du th\'eor\`eme
   de Lebesgue-Nikod\'ym)},
   conference={
      title={S\'eminaire de Probabilit\'es, XII (Univ. Strasbourg, Strasbourg,
      1976/1977)},
   },
   book={
      series={Lecture Notes in Math.},
      volume={649},
      publisher={Springer, Berlin},
   },
   date={1978},
   pages={489--490},
}

\bib{Ponce:2005}{article}{
   author={Ponce, Augusto C.},
   title={How to construct good measures},
   conference={
      title={Elliptic and parabolic problems},
   },
   book={
      series={Progr. Nonlinear Differential Equations Appl.},
      volume={63},
      publisher={Birkh\"auser, Basel},
   },
   date={2005},
   pages={375--388},
}

\bib{Ponce:2016}{book}{
  author={Ponce, Augusto C.},
  title={Elliptic PDEs, Measures and Capacities. From the Poisson equation to Nonlinear Thomas-Fermi problems},
  series={EMS Tracts in Mathematics},
  volume={23},
  publisher={European Mathematical Society (EMS)},
  address={Z\"urich},
  date={2016},
}

\bib{PonceWilmet:2017}{article}{
   author={Ponce, Augusto C.},
   author={Wilmet, Nicolas},
   title={Schr\"odinger operators involving singular potentials and measure
   data},
   journal={J. Differential Equations},
   volume={263},
   date={2017},
   number={6},
   pages={3581--3610},
}

\bib{Stadler:2009}{article}{
   author={Stadler, Georg},
   title={Elliptic optimal control problems with $L^1$-control cost and
   applications for the placement of control devices},
   journal={Comput. Optim. Appl.},
   volume={44},
   date={2009},
   number={2},
   pages={159--181},
}

\bib{Willem:2013}{book}{
   author={Willem, Michel},
   title={Functional analysis},
   series={Cornerstones},
   publisher={Birkh\"auser},
   address={Basel},
   date={2013},
   pages={xiv+213},
}

\end{biblist}
\end{bibdiv}

\end{document}